\def\Cal{\mathcal}
\newcommand{\na}{\nabla}
\newcommand{\tr}{\text{tr}}
\newcommand{\horiztract}[2]{(#1 ~ \mid ~ #2)}
\newcommand{\V}{\mbox{\sf P}}
\newcommand{\J}{\mbox{\sf J}}
\newcommand{\ce}{{\Cal E}}
\newcommand{\ct}{{\Cal T}}
\newcommand{\cT}{{\mathcal T}}
\newcommand{\rpl}                         
{\mbox{$
\begin{picture}(12.7,8)(-.5,-1)
\put(0,0.2){$+$}
\put(4.2,2.8){\oval(8,8)[r]}
\end{picture}$}}
\def\cB{\mathcal{B}}
\def\bg{\mbox{\boldmath $g$}}
\newcommand{\si}{\sigma}
\newcommand{\rank}{\operatorname{Rank}}
\newcommand{\End}{\operatorname{End}}
\newtheorem{theorem}{Theorem}[section]
\newtheorem{lemma}[theorem]{Lemma}
\newtheorem{proposition}[theorem]{Proposition}
\newtheorem{corollary}[theorem]{Corollary}
\theoremstyle{remark}
\newtheorem{remark}[theorem]{\rm\bf Remark}
\newtheorem*{definition*}{\rm\bf Definition}
\newcommand{\nn}[1]{(\ref{#1})}
\newcommand{\Ric}{\operatorname{Ric}}
\newcommand{\nd}{\nabla}
\def\sideremark#1{\ifvmode\leavevmode\fi\vadjust{\vbox to0pt{\vss
 \hbox to 0pt{\hskip\hsize\hskip1em
 \vbox{\hsize3cm\tiny\raggedright\pretolerance10000
  \noindent #1\hfill}\hss}\vbox to8pt{\vfil}\vss}}}%
\renewenvironment{leftbar}[1][\hsize]
{%
\MakeFramed{\hsize#1\advance\hsize-\width\FrameRestore}%
}
{\endMakeFramed}
\newenvironment{HM-adds}{\begin{leftbar}}{\end{leftbar}}
\author{A.\ Rod Gover \& Heather Macbeth}
\title{Detecting Einstein
geodesics: Einstein metrics in projective and conformal geometry}
  \dedicatory{Dedicated to Mike Eastwood on the occasion of his
    60${}^{\rm th}$ birthday} \thanks{ARG gratefully acknowledges
    support from the Royal Society of New Zealand via Marsden Grant
    10-UOA-113. HRM is grateful for the hospitality of the University
    of Auckland.}
\begin{document}

\address{ARG: Department of Mathematics\\
  The University of Auckland\\
  Private Bag 92019\\
  Auckland 1142\\
 New Zealand;
Mathematical Sciences Institute\\
Australian National University \\ ACT 0200} \email{r.gover@auckland.ac.nz}
\address{HM: Department of Mathematics\\
  Princeton University; Fine Hall, Washington Rd\\
  Princeton, NJ 08544\\
  USA} \email{macbeth@math.princeton.edu}

\subjclass[2000]{Primary 53B10, 53A20, 53C29; Secondary 35Q76, 53A30}  
\keywords{projective differential
  geometry, Einstein metrics, conformal differential geometry}

\begin{abstract}
Here we treat the problem: given a torsion-free connection do its
geodesics, as unparametrised curves, coincide with the geodesics of an
Einstein metric?  We find projective invariants such that the
vanishing of these is necessary for the existence of such a metric,
and in generic settings the vanishing of these is also sufficient.  We
also obtain results for the problem of metrisability (without the
Einstein condition): We show that the odd Chern type invariants of an
affine connection are projective invariants that obstruct the
existence of a projectively related Levi-Civita connection.  In
addition we discuss a concrete link between projective and conformal
geometry and the application of this to the projective-Einstein
problem.
\end{abstract}

\maketitle \pagestyle{myheadings} \markboth{Gover \& Macbeth}{Einstein
  metrics in projective and conformal geometry}

\section{Introduction}

 Suppose that $\nabla$ is an affine connection on a manifold of dimension at least 2, and consider its geodesics as unparametrised curves. The
 collection of all such curves is called a {\em projective structure}.  Two
 connections differing only by torsion share the same geodesics, so
 for our considerations there will be no loss of generality in
 assuming that $\nabla$ is torsion free.  The problem of whether these
 paths agree with the (unparametrised) geodesics of a
 pseudo-Riemannian metric is a classical problem that is also
 attracting significant recent interest
 \cite{BDE,EM,Liouville,Mikes,NurMet,Sinjukov}.  
 In general dimensions this is a difficult problem. However a striking
 simplification occurs if we ask a variant of the original question:

\begin{quote} Given a torsion-free connection $\nabla$,
do its geodesics, as unparametrised curves,
    coincide with the geodesics of an Einstein metric?\end{quote}

The surprising point is that it is easier to treat this new problem
directly without first investigating metrisability, and this problem
is the main subject of our article. We develop a general theory that
leads to ways to construct invariants with the property that they
depend only on a projective structure and such that their vanishing is
{\em necessary} for the projective structure to be compatible with an
Einstein Levi-Civita connection (in the sense of the stated
problem). In fact, in the generic case (where the notion of generic
will later be defined precisely, see Remark \ref{genericr}) the
vanishing of certain of these invariants is also {\em sufficient} for
the projective structure to be compatible with an Einstein metric; see
Section \ref{mainS}.

Although the Einstein metricity problem is our main aim, we also
obtain new results for the metricity problem, that is where the
Einstein condition is omitted.

Both problems have a serious physical motivation in dimension 4. The
observations of a part of spacetime may recover the path data of 
many bodies. If we assume that these bodies follow the paths of some
affine connection we may use the invariants to test the hypothesis
that the connection involved is the Levi-Civita connection of some
Einstein metric. See \cite{HL,HL2,Matv} for further discussion
and also related results. On the side of mathematics the ideas
surrounding these problems have the potential to contribute deeply to
understanding the links between pseudo-Riemannian geometry and
projective geometry, a theme with strong classical tradition see
e.g.\ \cite{Liouville,Mikes,KM} and references therein. The study of
Einstein metrics is especially important and as we shall show they
have a very special role in projective geometry.  In this article we
also describe in Section \ref{pun} a useful concrete link between
conformal geometry and projective geometry; in particular this link
led to several of the ideas used in the article. Finally the ideas
developed here can provide a template for similar problems in the
setting of, for example, $h$-projective geometry.

Our approach to the problems partly uses the projective tractor
calculus following its treatment in \cite{BEG,CGH,GN2}, see Section
\ref{pT}. The problem of projective metricity is controlled by a
projectively invariant overdetermined linear partial differential
equation \cite{Mikes,Sinjukov}.  The corresponding prolonged
differential system is studied by Eastwood and Matveev in \cite{EM}.
They show that the existence of a Levi-Civita connection in the
projective class of a projective manifold is equivalent to the
existence of a suitably nondegenerate section of a symmetric product
of the basic tractor bundle that has the property that it is parallel
for a certain projectively invariant connection that they also
construct.  The connection involved is not the usual (i.e.\ normal)
tractor connection.  However in \cite{CGM}  the
authors show that the
section involved is parallel for the normal tractor connection if and
only if the solution corresponds to an Einstein metric (and there are
strong connections with the works \cite{CGHjlms,CGHpoly,CGH,Leitner} as
explained there).  This last fact is closely related to results of
Armstrong \cite{ArmstrongP1}, and it is this link between Einstein
metricity and parallel tractors that leads to a considerable
simplification, as the normal tractor connection is well
understood. In Theorem \ref{charth} we give a new direct proof of the
required fact, which extends a result of Armstrong.

Here are the main results in order of appearance.  In Theorem
\ref{chern1} we show that the Chern type curvature forms of a manifold
equipped with scale connection are projectively invariant. (A scale
connection is simply a torsion-free affine connection with vanishing
first Chern form, and there is always such a connection in a
projective class; see Proposition \ref{defsc} in Section \ref{pdense}.) 
This parallels the result
of Chern-Simons that the same forms for a Levi-Civita connection are
conformally invariant \cite{Chern-Simons}. We show in Proposition
\ref{tchern} that they coincide with the curvature forms of the
projectively invariant tractor connection; this is one way to
understand the projective invariance.  In Corollary \ref{chernmet} we
conclude that the $k$-odd curvature forms obstruct the existence of a
Levi-Civita connection in the projective class. To the best of our
knowledge this is a new result for the question of projective
metrisability.

In Section \ref{mainS} we show that if $\nabla$ is the Levi-Civita
connection of an Einstein metric then its projective Cotton tensor (as
defined in Section \ref{projg}) vanishes.  A first observation is that
this solves the problem completely in dimension 2: see Remark
\ref{dim2}. For other dimensions this suggests using the condition of
projective-to-Cotton-flat; see Proposition \ref{c-space}. The key idea
in Section \ref{mainS} is that if an affine connection is projectively
related to a Cotton-flat affine connection then one can obtain a
natural formula for the change of connection required to achieve this
(this combines expression \nn{upsilon-def} with the constructions of
$D$ in Section \ref{natural}); at least this is the case if the
projective Weyl curvature satisfies very mild conditions of
genericity. That formula then leads to the construction, in Theorem
\ref{appmain2}, of sets of sharp obstructions to the (non-zero scalar
curvature) Einstein metricity problem -- a set of {\em sharp
  obstructions} means a collection of projective invariants the
vanishing of which is necessary and sufficient for the existence of an
Einstein Levi-Civita connection in the projective class.  Along the
way in our treatment we observe in Remark \ref{pcot} that we also
obtain projective invariants that sharply obstruct the existence of a Cotton
flat connection in the projective class. These invariants are made
natural by again using Theorem \ref{appmain2} to substitute for $D$.

In Section \ref{prol} we describe a very general principle that may be
used to proliferate further projective invariants which obstruct the
existence of Einstein Levi-Civita connections in the projective class.
Finally in Section \ref{confg} we show that when a projective class
includes the Levi-Civita connection of an Einstein metric, the
corresponding projective and conformal tractor connections are very
simply related. This observation motivated many of the earlier
constructions and is used to give alternative proofs of some of the
results. Included in this section is an observation that on a
projective 3-manifold the projective Weyl tensor itself provides a
sharp obstruction to the Einstein metricity problem; see Corollary
\ref{n3ob}.

\section{Projective differential geometry} \label{projg}

Projective structures were defined in the introduction. An equivalent
definition is as follows.  A {\em projective structure} $(M^n,p)$,
$n\geq 2$, is a smooth manifold equipped with an equivalence class $p$
of torsion-free affine connections. The class is characterised
by the fact that two connections $\nabla$ and $\widehat\nabla$ in $p$
have the same {\em path structure}, that is the same geodesics up to
parametrisation. Explicitly the transformation relating these connections on $TM$ and $T^*M$ are given by 
 \begin{equation} \label{ptrans}
 \widehat\nabla_a Y^b = \nabla_a Y^b +\Upsilon_a Y^b + \Upsilon_c Y^c 
 \delta^b_a, \quad \mbox{and} \quad \widehat\nabla_a u_b = \nabla_a u_b - \Upsilon_a u_b -\Upsilon_b u_a,
\end{equation}
where $\Upsilon$
is some smooth section of $ T^*M$.
In the setting of a projective structure $(M,p)$ any connection $\nabla\in p$
is called a {\em Weyl connection} or {\em Weyl structure} on $M$. 

\subsubsection{Curvature for Weyl structures}\label{wstC}

Given a connection $\nabla\in p$ the curvature (on $TM$) is defined as
usual by 
$$
(\nabla_a\nabla_b-\nabla_b\nabla_a)v^c= R_{ab}{}^c{}_dv^d.
$$ 
Considering the tensor decomposition of this we see that it can be
written uniquely as
\begin{equation}\label{pweyl}
R_{ab}{}^c{}_d=W_{ab}{}^c{}_d+ 2\delta^c_{[a}\V_{b]d}+\beta_{ab}\delta^c_d ,
\end{equation}
where the {\em projective Weyl tensor} $W_{ab}{}^c{}_d $ shares the
algebraic symmetries of $R$, but in addition it is completely trace
free, and $\beta_{ab}$ is skew. The tensor $\V_{ab}$ is called the
{\em projective Schouten tensor} and from the algebraic Bianchi
identity $R_{[ab}{}^c{}_{d]}=0$, one finds $ \beta_{ab}=-2\V_{[ab]}$.
The Ricci tensor is defined by 
\begin{equation}\label{pschouten}
\Ric_{ab}:=R_{ca}{}^c{}_b \quad \mbox{and so} \quad (n-1)\V_{ab}=
\Ric_{ab}+\beta_{ab}
\end{equation}
and we note this is not generally symmetric.
From the differential Bianchi identity we obtain the identity
$$
\nabla_c W_{ab}{}^c{}_d=(n-2)C_{dab},  
$$ 
where $C_{dab} := 2 \nabla_{[a}\V_{b]d}$ is called the {\em
  projective Cotton tensor}.

Under a change of connection, as in \nn{ptrans}, one computes that the
Weyl curvature $W_{ab}{}^c{}_d$ is unchanged. Thus it is an invariant
of the projective structure $(M,p)$. In dimension 2 this vanishes but
the Cotton tensor is projectively invariant.  On the other hand
\begin{equation}\label{pcurvtrans}
\V^{\widehat{\nabla}}_{ab}= \V^{\nabla}_{ab}-\nabla_a
\Upsilon_b+\Upsilon_a\Upsilon_b\quad \mbox{and} \quad
\beta^{\widehat{\nabla}}_{ab}=
\beta^{\nabla}_{ab}+2\nabla_{[a}\upsilon_{b]} .
\end{equation}

\subsection{Projective densities and their connections}\label{pdense}
On any smooth $n$-manifold $M$ the highest exterior power of the
tangent bundle $(\Lambda^nTM)$ is a line bundle. The square of any
real line bundle has a canonical positive orientation. It follows that its square $(\Lambda^nTM)^2$ has a
canonical positive orientation and we shall fix that as its
orientation.  
 We may forget the tensorial
structure of $(\Lambda^nTM)^2$ and view this purely as a line bundle.
For our subsequent discussion it is convenient to take the positive
$(2n+2)^{th}$ root of $(\Lambda^nTM)^2$ and we denote this $K$ or
$\ce(1)$. Then for $w\in \mathbb{R}$ we denote $K^w$ by $\ce
(w)$. Sections of $\ce (w)$ will be described as {\em projective
  densities} of weight $w$.  
Given any bundle $\mathcal{B}$ we
shall write $\mathcal{B}(w)$ as a shorthand notation for
$\mathcal{B}\otimes \ce(w)$.

Now we consider a projective manifold $(M,p)$.  Each connection
$\nabla\in p$ determines a connection, also denoted $\nabla$, on
$(\Lambda^nTM)^2$ and hence on its roots $\ce(w)$, $w\in \mathbb{R}$.
  For $\nabla\in p$
let us (temporarily) denote the connection induced on $\ce(1)$ by
$D^\nabla$, and write $F^\nabla$ for its curvature. 
(In fact $F^\nabla=\beta^\nabla$, see \cite{GN2}, however we shall not 
need that fact here.) 
It is easily verified that,
under the transformation \nn{ptrans}, $D^\nabla$
transforms according to
\begin{equation}\label{pch}
D^{\widehat{\nabla}}_a= D^\nabla_a + \Upsilon_a,
\end{equation}
where we view $\Upsilon_a$ as a multiplication operator.  Since the
connections on $\ce(1)$ form an affine space modelled on
$\Gamma(T^*M)$ it follows that by moving around in $p$ we can hit any
connection on $\ce(1)$, and conversely a choice of connection on
$\ce(1)$ determines a connection in $p$ (or this may be seen by an explicit formula, e.g.\ \cite{CGM}).
 Let us summarise.
\begin{proposition}\label{weyly}
On a projective manifold $(M,p)$ a choice of Weyl structure is the same as
a choice of connection on $\ce(1)$.
\end{proposition}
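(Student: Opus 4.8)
The plan is to exhibit an explicit bijection between the projective class $p$ and the space of linear connections on $\ce(1)$, built entirely from the transformation formulae already recorded. First I would fix one connection $\nabla_0\in p$; such a connection exists because $p$ is, by definition, a (nonempty) equivalence class of torsion-free connections. As recalled just before the statement, each $\nabla\in p$ induces a connection $D^\nabla$ on $\ce(1)$, so we have a map
\[
\Phi : p \longrightarrow \{\text{connections on } \ce(1)\}, \qquad \Phi(\nabla) = D^\nabla ,
\]
and it suffices to prove that $\Phi$ is a bijection.

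To organise this I would compare both sides with the vector space $\Gamma(T^*M)$. On the domain side, by \nn{ptrans} every $\nabla\in p$ arises from $\nabla_0$ via a unique one-form $\Upsilon\in\Gamma(T^*M)$; conversely every such $\Upsilon$ produces through \nn{ptrans} a connection that is again torsion-free — the induced change in Christoffel symbols, $\Upsilon_a\delta^c_b+\Upsilon_b\delta^c_a$, is symmetric in $a,b$ — and has the same geodesics, hence lies in $p$. Thus $\nabla\mapsto\Upsilon$ is a bijection $p\to\Gamma(T^*M)$. On the target side, since the connections on $\ce(1)$ form an affine space modelled on $\Gamma(T^*M)$, the subtraction map $D\mapsto D-D^{\nabla_0}$ is a bijection onto $\Gamma(T^*M)$. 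The crucial link is \nn{pch}, which says precisely that $D^\nabla - D^{\nabla_0} = \Upsilon$ whenever $\nabla$ and $\Upsilon$ correspond as above; equivalently, post-composing $\Phi$ with $D\mapsto D-D^{\nabla_0}$ yields exactly the bijection $\nabla\mapsto\Upsilon$. Since that composite and the second factor are bijections, $\Phi$ is a bijection.

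Finally I would unwind the two directions, as they are what the statement asserts: injectivity of $\Phi$ is the assertion that $D^{\widehat\nabla}=D^\nabla$ forces $\Upsilon=0$, hence $\widehat\nabla=\nabla$ by \nn{ptrans}; surjectivity is the assertion that an arbitrary connection $D'$ on $\ce(1)$ equals $D^{\widehat\nabla}$, where $\widehat\nabla$ is obtained from $\nabla_0$ through \nn{ptrans} with $\Upsilon:=D'-D^{\nabla_0}\in\Gamma(T^*M)$. I do not anticipate a genuine obstacle here: all the analytic content is already packaged in \nn{ptrans} and \nn{pch}, and the only point demanding an explicit line of justification is that \nn{ptrans} captures precisely membership in $p$ — that the connection built from an arbitrary $\Upsilon$ is not merely torsion-free but genuinely projectively equivalent to $\nabla_0$ — which is immediate from the defining property of the projective class.
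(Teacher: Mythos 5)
Your argument is correct and is essentially the paper's own: the paper likewise deduces the proposition from the transformation law \nn{pch} together with the observation that both $p$ and the connections on $\ce(1)$ are affine spaces modelled on $\Gamma(T^*M)$. You have merely written out the resulting bijection and its inverse more explicitly than the paper does.
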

Thus in the setting of
projective geometry we drop the notation $D^\nabla$ and simply write
$\nabla$ for the connection on $\ce(1)$ equivalent to a Weyl 
structure $\nabla$.

\begin{proposition}\label{defsc} 
Since $\ce(1)$ is a trivial bundle, any chosen trivialisation
determines a flat connection on $\ce(1)$ in the obvious way. Such a
connection will be called a \underline{scale connection}. 
That is there is a
special class $s$ of connections in $p$: $\nabla\in s$ if and only if
it preserves a section of $\ce(1)$. 
Again using that $\ce(1)$ is a trivial bundle it
follows that $\nabla $ is a  scale connection if and only if $F^\nabla=0$.
\end{proposition}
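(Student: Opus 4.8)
The plan is to treat the three assertions in turn, the last being the only substantive one. Recall first that $\ce(1)$ is trivial: $(\Lambda^n TM)^2$ is a square, hence carries a canonical positive orientation; its positive $(2n+2)$-th root $\ce(1)=K$ inherits one; and an oriented real line bundle admits a nowhere-vanishing section. So a trivialisation of $\ce(1)$ is the same data as a choice of positive section $\tau\in\Gamma(\ce(1))$, and the flat connection it determines ``in the obvious way'' is $\nabla^\tau$, defined on sections by $\nabla^\tau(f\tau):=df\otimes\tau$; this is a connection by the Leibniz rule, and it is flat because $\tau$ is a global parallel frame, so its curvature, computed in that frame, vanishes identically. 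By Proposition \ref{weyly}, $\nabla^\tau$ is the connection on $\ce(1)$ induced by a unique Weyl connection in $p$, and these Weyl connections are exactly what we call scale connections. The alternative description in the statement is then immediate: $\nabla^\tau$ preserves $\tau$; conversely, if $\nabla\in p$ preserves a section $\sigma$ of $\ce(1)$, then $\sigma$ is nowhere vanishing (a parallel section vanishing somewhere vanishes identically on a connected $M$), hence a trivialisation, and $\nabla=\nabla^\sigma$.

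It remains to prove the equivalence with $F^\nabla=0$. One implication is already contained in the previous paragraph: a scale connection possesses a global parallel frame, hence has zero curvature. For the converse, fix any trivialisation $\tau$ and write the connection of $\nabla$ in this frame as $\nabla\tau=\theta\otimes\tau$ for a uniquely determined $\theta\in\Omega^1(M)$; then for any positive function $\mu$ one computes $\nabla(\mu\tau)=\mu\,(d\log\mu+\theta)\otimes\tau$, while the curvature of $\nabla$ in the frame $\tau$ is $F^\nabla=d\theta$. Thus $F^\nabla=0$ says precisely that $\theta$ is closed, and if we can find $\mu>0$ with $d\log\mu=-\theta$ then $\nabla(\mu\tau)=0$, so $\nabla=\nabla^{\mu\tau}$ is a scale connection; this completes the argument modulo that last choice.

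So the one step that amounts to more than formal bookkeeping is the passage from ``$\theta$ closed'' to ``$\theta$ exact'' (equivalently, $\theta=df$ for some function $f$, in which case $\mu=e^{-f}$ does the job). Locally this is just the Poincar\'e lemma, so the equivalence holds locally with no hypothesis whatsoever. Globally it is here, and only here, that the triviality of $\ce(1)$ must really be invoked, in the sharp form that the flat connection $\nabla^\tau$ has trivial holonomy; this is automatic when $H^1_{dR}(M)=0$ (in particular for simply connected $M$), and otherwise the displayed equivalence should be read in this local sense, which is all that is needed in the sequel. I do not anticipate any further difficulty: beyond this exactness point, everything reduces to the correspondence of Proposition \ref{weyly} together with an elementary computation in a chosen frame.
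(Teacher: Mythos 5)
Your proof is correct, and you have isolated the one genuinely nontrivial step, which the paper itself passes over without argument (Proposition \ref{defsc} is stated as a definition plus an assertion, with no proof supplied). Writing $\nabla\tau=\theta\otimes\tau$ in a global positive frame $\tau$, flatness gives $d\theta=0$, but producing a parallel section requires $\theta$ exact, and closed does not imply exact unless $H^1_{dR}(M)=0$ (or one works locally). This caveat is not a defect of your argument but a genuine imprecision in the statement: on a manifold with $H^1_{dR}(M)\neq 0$, start from a scale connection and move to $\widehat\nabla$ by a closed, non-exact $\Upsilon$; by \nn{curvt} one still has $F^{\widehat\nabla}=0$, yet $\widehat\nabla$ preserves no global section of $\ce(1)$, since its holonomy $[\gamma]\mapsto e^{-\oint_\gamma\Upsilon}$ is nontrivial. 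The paper's own discussion immediately after the proposition corroborates this: a change between two scale connections is by an \emph{exact} $\Upsilon$, whereas \nn{curvt} shows that preserving the condition $F^\nabla=0$ only requires $\Upsilon$ \emph{closed}, so the two classes of connections differ by exactly $H^1_{dR}(M)$. Your reading of the final equivalence as a local statement (or one valid when $H^1_{dR}(M)=0$) is therefore the correct one; the remaining parts of your write-up (the frame computation, the appeal to Proposition \ref{weyly}, and the observation that a parallel section of a line bundle over connected $M$ is either nowhere zero or identically zero) are routine and consistent with what the paper takes for granted.
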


It follows from \nn{pch} that 
\begin{equation}\label{curvt}
F^{\widehat{\nabla}}=F^\nabla+ d \Upsilon 
\end{equation}
 and hence it is clear that if $\nabla$ and $\widehat{\nabla}$ are
 both scale connections then $d \Upsilon=0$. In fact since from the
 definition of scale connections $\Upsilon$ is then exact, as a change
 of scale connection is equivalent to a change of trivialisation of
 the globally trivial bundle $\ce(1)$. (Scale connections are called
 {\em exact Weyl structures} in \cite{CSbook}.)

\subsection{Projective tractor calculus}\label{pT}

By the definition of a projective structure $(M,p)$, there is no
preferred connection on the tangent bundle to $M$. This potentially
impedes calculation and understanding.  However there is a canonical
connection, known as the tractor connection, on a rank $n+1$ bundle
that is closely related to $TM$. This is due independently to Cartan
and Thomas \cite{ Cartan,Thomas}, here we follow \cite{BEG,CGHjlms}
and the conventions there.  In an abstract index notation let us write
$\ce_A$ for $J^1\ce(1)$, the first jet prolongation of $\ce(1)$.
Canonically we have the jet exact sequence
\begin{equation}\label{euler}
0\to \ce_a(1)\stackrel{Z_A{}^a}{\to} \ce_A
\stackrel{X^A}{\to}\ce(1)\to 0,
\end{equation}
where we have written $X^A\in \Gamma \ce^A(1)$ for the jet projection,
and $Z_A{}^a$ for the map inserting $\ce_a(1)$; these are both
canonical \cite{palais}.  We write $\ce_A=\ce_a(1)\rpl \ce(1)$ to
summarise the composition structure in \nn{euler}.  As mentioned, any
connection $\nabla \in p$ determines a connection on $\ce(1)$.  On the
other hand a connection on a vector bundle is the same as a splitting
of its 1-jet prolongation. Thus, in particular, a choice of connection
on $\ce(1)$ is a splitting of \nn{euler}.  If the tractor $U_A$ splits,
with respect to the connection $\nabla$, as $(\mu_i ~\mid~ \sigma)$,
then it splits with respect to a different $\widehat\nabla$ related to
$\nabla$ by \nn{ptrans} as 
\begin{equation}\label{ttrans}
(\mu_i+\Upsilon_i\sigma ~\mid~ \sigma).
\end{equation}

With respect to the direct sum decomposition $\ce_A
\stackrel{\nabla}{=} \ce_a(1)\oplus \ce(1) $ from a choice of
splitting $\nabla$, we define a connection $\nabla^{\mathcal{T}^*}_a$
on $\cT^*$ by,
\begin{equation}\label{pconn}
\nabla^{\mathcal{T}^*}_a (\mu_b ~ \mid ~ \si) := (\nabla_a \mu_b +
\V_{ab} \si ~ \mid ~ \nabla_a \si -\mu_a).
\end{equation}
It turns out that \nn{pconn} is
independent of the choice $\nabla \in p$, and so
$\nabla^{\mathcal{T}^*}$ is determined canonically by the projective
structure $p$. This is the {\em cotractor connection} as defined in
\cite{BEG} and is equivalent to the normal Cartan connection for the
Cartan structure of type $(G,P)$, see \cite{CapGoTAMS}. Thus we shall
also term $\ce_A$ the {\em cotractor bundle}, and we note the dual
bundle, called {\em tractor bundle} and denoted $\ce^A$ (or in index
free notation $\mathcal{T}$), has canonically the dual {\em tractor
  connection}: in terms of a splitting dual to that above this is
given by
\begin{equation}\label{tconn}
\nabla^\cT_a \left( \begin{array}{c} \nu^b\\
\rho
\end{array}\right) =
\left( \begin{array}{c} \nabla_a\nu^b + \rho \delta^b_a\\
\nabla_a \rho - \V_{ab}\nu^b
\end{array}\right).
\end{equation}

In the following we shall normally use simply $\nabla$ to denote
$\nabla^\cT$, $\nabla^{\mathcal{T}^*}$, or any of the connections
these induce on tensors products and powers of $\cT$ and $\cT^*$.  We
may use the same notation for a connection in $p$ and the coupling of
this with the tractor connection, but the meaning will clear by
context.

In the next Sections we will need the curvature $\Omega$ of the
projective tractor connection $\na$. This is defined by
$(\na_a\na_b-\na_b\na_a) U^C=\Omega_{ab}{}^C{}_D U^D$. This is easily
computed in terms of the curvature of any Weyl connection in the
projective class. First we need  some notation.

Let us denote by 
$$
Y_A:\ce(1)\to \ce_A\quad \mbox{and}\quad  Y^A{}_a: \ce_A\to \ce_a(1)
$$ the bundle maps splitting \nn{euler} as determined by (and
equivalent to) some connection $\nabla$ on $\ce(1)$.  So we have $X^AY_A=1$, $Z_A{}^a Y^A{}_b=\delta^a_b$
(the section of $\End(TM)$ that is the identity at every point) and
all other tractor index contractions of a pair from $X^A$, $Y_A$,
$Z_A{}^a$, $Y^A{}_b$ results in zero. Thus, for example, if
$\Gamma(\ce_A) \ni U_A \stackrel{\nabla}{=} (\mu_b ~ \mid ~ \si)\in
\Gamma(\ce_a(1)\oplus \ce(1)) $ then this means
$$
U_A=Z_A{}^a\mu_a+ Y_A \si .
$$

In this notation the curvature of the tractor connection 
is easily calculated to be (cf.\ \cite{BEG})
\begin{equation}\label{tcurvform}
\Omega_{ab}{}^C{}_D = W_{ab}{}^c{}_dZ_D{}^dY^C{}_c - C_{dab}Z_D{}^dX^C .
\end{equation}
That is, schematically, for a tractor $U^A=\begin{pmatrix}\nu^a\\ \rho\end{pmatrix}$, 
\[
\Omega_{ab}{}^C{}_DU^D=
\begin{pmatrix}
W_{ab}{}^c{}_d & 0 \\
-C_{dab} & 0
\end{pmatrix}
\begin{pmatrix}
\nu^d \\ \rho
\end{pmatrix}.
\]
It follows that 
\begin{equation}\label{XW}
\Omega_{ab}{}^C{}_D X^D=0.
\end{equation}

\section{Characterising Einstein in projective geometry}\label{char}

We call a symmetric form $h^{AB}$ on $\cT^*$ a \emph{sub-metric}, if
the restriction of $h^{AB}$ to the invariant sub-bundle $\ce_a(1)$
(see \nn{euler}) is genuinely a metric, i.e.\ non-degenerate.
Sub-metrics that are parallel for the tractor connection have an
important interpretation.
\begin{theorem} \label{charth}
The sub-metrics on $\cT^*$, compatible with the tractor connection
$\nabla$, are in one-to-one correspondence with the Einstein metrics
on $M$ whose Levi-Civita connection is in $p$.
\end{theorem}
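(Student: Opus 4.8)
The plan is to work in a splitting determined by a choice of connection $\nabla\in p$, where a sub-metric $h^{AB}$ on $\cT^*$ decomposes as
\[
h^{AB}=\begin{pmatrix}g^{ab} & \mu^a\\ \mu^b & \rho\end{pmatrix},
\]
with $g^{ab}$ the (by hypothesis non-degenerate) restriction to $\ce_a(1)$, of weight $2$ (since the tractor indices carry no weight but the slots $Z_A{}^a$ and $X^A$ are weighted as in \nn{euler}). Using \nn{tconn} and the Leibniz rule one computes $\nabla_a h^{BC}$ in terms of $\nabla_a g^{bc}$, $\nabla_a\mu^b$, $\nabla_a\rho$ and the Schouten tensor $\V_{ab}$. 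Setting this to zero slot by slot should yield, reading from the top: $\nabla_a g^{bc}$ is pure trace (so $g$ determines a Levi-Civita connection in $p$ after the standard projective rescaling), then $\mu^a$ is essentially the gradient of $\rho$ expressed via $g$, and finally a scalar equation relating $\nabla_a\rho$, $\mu^b$ and $\V_{ab}\mu^b$. The first obstacle, and the one to address carefully, is bookkeeping the density weights and checking that ``$\nabla_a g^{bc}$ pure trace'' is exactly the condition for $g$ (suitably normalised to a weight-$0$ metric) to be metric with Levi-Civita connection projectively equivalent to $\nabla$; this is the Eastwood--Matveev-type statement and should be invoked or re-derived in a line.

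Next I would change to the distinguished connection $\bar\nabla$, namely the Levi-Civita connection of the metric $g_{ab}$ that $g^{bc}$ produces (the inverse after fixing the scale). In that splitting $\bar\nabla g^{bc}=0$ exactly, so the top slot of the parallel condition is automatic; the middle slot forces $\mu^b=0$ (the gradient of the now-constant $\rho$); and the bottom slot collapses to $\bar\nabla_a\rho=0$ together with the algebraic identity $\rho\,\delta^b_a = \V_{ab}$-type relation — more precisely $\V_{ab}=-\rho\, g_{ab}$ up to the normalising constant. Because for a Levi-Civita connection the projective Schouten tensor $\V_{ab}$ equals $\tfrac{1}{n-1}\Ric_{ab}$ and is symmetric (here $\beta_{ab}=0$), this is precisely the Einstein equation $\Ric_{ab}=\la g_{ab}$, with $\la$ proportional to the constant $\rho$. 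Conversely, starting from an Einstein metric in $p$ one runs the same computation backwards: take $g^{ab}$ to be its inverse metric with the correct weight, set $\mu^a=0$ and $\rho$ the appropriate multiple of the Einstein constant, and check the three slots vanish — this direction is a direct verification.

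The main obstacle I anticipate is not any single computation but getting the normalisations consistent: the weight carried by $g^{ab}$ as a tractor object, the constant relating $\rho$ to the Einstein scalar $\la$, and the precise form of the ``middle slot'' equation forcing $\mu^a\propto\nabla^a\rho$. One must also confirm non-degeneracy is preserved and that the correspondence is genuinely bijective — two parallel sub-metrics inducing the same Einstein metric must coincide, which follows since the Einstein metric together with the requirement of parallelism pins down $\mu^a$ and $\rho$ uniquely in any splitting. I would organise the write-up as: (1) parallel condition in a general splitting, three equations; (2) interpret equation one as projective metrisability and pass to $\bar\nabla$; (3) in that gauge solve equations two and three, obtaining $\mu=0$ and the Einstein condition; (4) remark that reversing the steps gives the converse and that the map is a bijection. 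A clean alternative worth mentioning is to use the conformal-to-Einstein analogue of Gover's earlier work and the link of Section \ref{confg}, but the direct tractor computation above is self-contained given \nn{tconn} and \nn{pcurvtrans}.
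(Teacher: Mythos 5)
Your proposal is correct and follows the same basic strategy as the paper: express the sub-metric in a splitting, impose parallelism slot by slot, pass to a distinguished connection in $p$, and read off the Levi-Civita and Einstein conditions. The one genuine difference is how the distinguished connection is found. You work in a general splitting, interpret the top slot as the metrisability equation ``$\nabla_a g^{bc}$ is pure trace,'' and invoke the Eastwood--Matveev correspondence to produce the Levi-Civita connection before returning to the remaining slots. The paper instead short-circuits this (Lemma \ref{dia}): since $g^{ab}$ is nondegenerate, a purely algebraic projective gauge change kills the off-diagonal component $v^a$ of $h$, and in that diagonal splitting the top slot of the parallel condition is literally $\nabla_a g^{bc}=0$, so the chosen connection \emph{is} the Levi-Civita connection with no appeal to metrisability theory; the off-diagonal slot then gives $\V_{ab}=\tau g_{ab}$ (Einstein) and the bottom slot gives $\nabla\tau=0$. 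The paper's route is thus self-contained, while yours imports one standard external fact that you would indeed need to re-derive in a line. Two small slips to fix in a write-up: in the diagonal (Levi-Civita) gauge it is the \emph{top} slot that forces $\mu^b=0$, and the Einstein relation comes from the \emph{middle} (off-diagonal) slot with the sign $\V_{ab}=+\tau g_{ab}$ in the paper's conventions (compare \nn{gscale}); these are exactly the normalisation issues you flagged and do not affect the argument.
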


In the case when the Einstein metric is not Ricci-flat, the sub-metric
on $\cT^*$ is a metric and hence its inverse is a compatible metric on the
\emph{projective tractor bundle}, $\cT$.  This compatible
metric, together with the analogous correspondence theorem, has been
described by Armstrong \cite{ArmstrongP1}. 
On the other hand
\cite[Theorem 3.3]{CGM} establishes an equivalent of Theorem
\ref{charth} by characterising what is called a normal solution of a
certain overdetermined partial differential equation, each suitably
non-degenerate solution of which corresponds to a metric giving a
Levi-Civita connection in the projective class. The remainder of this
Section is used to give a simple direct proof of the Theorem using the
projective tractor calculus.

Recall that a connection $\nabla\in p$ is the same as a splitting of
the sequence \nn{euler}.
\begin{lemma}\label{lemon} Let $\nabla$ be some splitting  of \nn{euler}.
Suppose that $h^{BC}$ is a section of  $\text{Sym}^2(\cT)$ which is diagonal
with respect to the splitting $\nabla$,
\[
\begin{pmatrix}
g^{ab} & 0 \\
0 & \tau
\end{pmatrix} .
\]
Then in the splitting $\nabla$ the tractor connection on $h^{BC}$ is given by
$$\nabla_a h^{BC}=
\begin{pmatrix}
\na_ag^{bc} & \tau \delta_a{}^c-g^{bc}\V_{ab} \\
\tau \delta_a{}^c-g^{bc}\V_{ab} & \nabla_a \tau
\end{pmatrix}.$$
\end{lemma}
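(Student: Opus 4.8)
The plan is to write $h^{BC}$ out in the splitting determined by $\nabla$ and differentiate termwise. Dualising the discussion of \nn{euler}, the choice of $\nabla$ equips the tractor bundle $\cT$ with splitting tractors $X^B$ and $Y^B{}_b$ (adjoint to the maps $X^A$, $Y^A{}_a$ of that discussion), in terms of which the hypothesis that $h^{BC}$ is diagonal reads exactly
$$h^{BC} \;=\; g^{bc}\,Y^B{}_b\,Y^C{}_c \;+\; \tau\, X^B X^C,$$
with $g^{bc}$ and $\tau$ the entries of the displayed matrix. Then $\nabla_a h^{BC}$ follows from the Leibniz rule as soon as we know how $\nabla_a$ acts on $X^B$ and on $Y^B{}_b$.

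Those two derivatives I would extract straight from the defining formula \nn{tconn}. Writing an arbitrary tractor as $V^B = Y^B{}_b\nu^b + X^B\rho$, expanding $\nabla_a V^B$ by Leibniz, and comparing with the right-hand side of \nn{tconn} (the components $\nu^b$, $\rho$ being arbitrary) forces
$$\nabla_a X^B \;=\; Y^B{}_a, \qquad \nabla_a Y^B{}_b \;=\; -\,\V_{ab}\,X^B.$$
Equivalently one may just quote these standard identities and cross-check them against $X^B Y_B=1$ and $Z_B{}^b Y^B{}_c = \delta^b_c$.

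Substituting these into the Leibniz expansion of $\nabla_a\big(g^{bc}Y^B{}_bY^C{}_c + \tau X^BX^C\big)$ and sorting the result by the four slots $Y^B{}_bY^C{}_c$, $Y^B{}_bX^C$, $X^BY^C{}_c$, $X^BX^C$ yields the asserted matrix: the $YY$-slot gives $\nabla_a g^{bc}$, the $XX$-slot gives $\nabla_a\tau$, and each of the two off-diagonal slots collects one term $\tau\,\delta_a{}^c$ (from differentiating an $X$-factor) together with one term $-g^{bc}\V_{ab}$ (from differentiating a $Y$-factor), summing to $\tau\delta_a{}^c - g^{bc}\V_{ab}$; the two off-diagonal entries agree automatically since $g$ and $h$ are symmetric. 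I do not expect a genuine obstacle here — it is essentially a one-line application of the Leibniz rule. The only points needing care are the single sign in $\nabla_a Y^B{}_b = -\V_{ab}X^B$ and the tractor-versus-cotractor placement of the projectors $X,Y,Z$, together with the conceptual point that the hypothesis "diagonal" and the output (including the entries $\nabla_a g^{bc}$, $\nabla_a\tau$ and the tensor $\V_{ab}$) are all to be read relative to the single fixed splitting $\nabla$.
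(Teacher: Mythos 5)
Your proposal is correct and is essentially the paper's own argument: the paper simply declares the lemma ``immediate from \nn{tconn}'', and your Leibniz-rule computation with $\nabla_a X^B = Y^B{}_a$ and $\nabla_a Y^B{}_b = -\V_{ab}X^B$ is precisely the standard way of carrying that out. The derived splitting-tractor derivatives and the resulting matrix entries all check out against \nn{tconn}.
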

\begin{proof} This is immediate from \nn{tconn}.
\end{proof}

First suppose that $g_{ab}$ is Einstein, with
$\V_{ab}=\lambda g_{ab}$. We define a metric $h^{AB}$ on the cotractor
bundle as follows. We work in the splitting of $g$'s Levi-Civita connection
$\nabla$. For $U_A\stackrel{\nabla}{=}(\mu_a ~\mid~ \sigma)$,
$\underline{U}_A\stackrel{\nabla}{=}( \underline{\mu}_a ~\mid ~ \underline{\sigma} )$, $h^{AB}$ is defined by
\begin{equation}\label{gscale}
h^{AB}U_A\underline{U}_B=g^{ab}\mu_a\underline\mu_b + \lambda\sigma\underline\sigma.
\end{equation}
From the fact that $\nabla$ is a Levi-Civita connection for an
Einstein metric $g$ we have $\nabla_cg^{ab}=0$, $\nabla_c\lambda=0$,
and $\lambda g_{ab}=\V_{ab}$. So by Lemma \ref{lemon} $h^{AB}$ is
compatible with the tractor connection.

Conversely, suppose that $p$ is a projective equivalence class such
that there exists a sub-metric $h^{AB}$ on $\cT^*$, compatible with
the tractor connection $\na$.  The restriction of $h^{AB}$ to
$\cT^*$'s invariant sub-bundle $\ce_a(1)$ is a metric $g^{ab}$. This
induces a metric on
\[
\big(\Lambda^n[T^*M(1)]\big)^2=\big(\Lambda^nT^*M\otimes \ce(1)^n\big)^2
=\ce(-2n-2)\otimes \ce(2n) = \ce(-2),
\]
 hence also a metric on
$\ce_a=\ce_a(1)\otimes\ce(-1)$. Thus we obtain a metric on all
weighted tensor bundles and this is compatible with trivialising the
density bundles using positively oriented unit length sections.  We
will use the notation $g$ to mean any of these metrics, with the
meaning clear from context.
\begin{lemma}\label{dia}
There is a unique connection $\na$ in $p$ which
  splits the Euler sequence
$$
0\to \ce_a(1)\to \cT^*\to \ce(1)\to 0
$$
in agreement with the orthogonal splitting via $h$.
\end{lemma}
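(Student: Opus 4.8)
The plan is to build the connection directly from $h$ by taking the fibrewise $h$-orthogonal complement of the canonical subbundle $\ce_a(1)\subset\cT^*$ and translating that complement into a splitting of the Euler sequence \nn{euler} — which, as recalled just before Lemma \ref{lemon}, is the same datum as a connection in $p$.

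First I would unwind the sub-metric hypothesis: it says precisely that $h^{AB}$ restricts to the nondegenerate form $g^{ab}=h^{AB}Z_A{}^aZ_B{}^b$ on the subbundle $Z_A{}^a\ce_a(1)\subset\cT^*$. By the standard fact that a symmetric bilinear form nondegenerate on a subspace splits off that subspace, one obtains fibrewise, hence as bundles, $\cT^*=\ce_a(1)\oplus L$ with $L:=\ce_a(1)^{\perp_h}$ and $\mathrm{rk}\,L=\mathrm{rk}\,\cT^*-\mathrm{rk}\,\ce_a(1)=1$. This argument uses nothing about $h$ beyond nondegeneracy on the \emph{fixed} subbundle $\ce_a(1)$; in particular it is insensitive to whether $h$ is degenerate on all of $\cT^*$, which is exactly the Ricci-flat situation — that is the one spot where a little care is wanted, and it costs nothing.

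Next I would turn $L$ into a splitting. From \nn{euler}, $\ce_a(1)=\ker\!\left(X^A\colon\cT^*\to\ce(1)\right)$; since $L\cap\ce_a(1)=0$ and $L$ is a line bundle, $X^A$ restricts to a bundle isomorphism $L\xrightarrow{\ \sim\ }\ce(1)$, whose inverse is a map $Y_A\colon\ce(1)\to\cT^*$ with $X^AY_A=1$ and image $L$. Thus $Y_A$ is a splitting of \nn{euler} whose distinguished complement to $\ce_a(1)$ is $L$, so the induced decomposition $\cT^*\stackrel{\nabla}{=}\ce_a(1)\oplus\ce(1)$ is $h$-orthogonal: the off-diagonal block $h^{AB}Z_A{}^aY_B$ vanishes and, in the notation of Lemma \ref{lemon}, $h$ is block-diagonal with blocks $g^{ab}$ and $\tau=h^{AB}Y_AY_B$. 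By Proposition \ref{weyly} (with the identification of connections in $p$ with splittings of \nn{euler}) this produces the sought $\nabla\in p$.

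Finally, uniqueness: any $\widehat\nabla\in p$ whose splitting agrees with the orthogonal splitting via $h$ has distinguished complement line contained in $\ce_a(1)^{\perp_h}=L$, hence equal to $L$ since both are line bundles; as $L$ is intrinsic to $h$, we recover the same splitting and the same connection. I do not anticipate a real obstacle here — the entire content is the linear-algebra observation above and the rest is bookkeeping with $X^A$, $Z_A{}^a$, $Y_A$ — so the only thing to be vigilant about is exactly matching the ``agreement with the orthogonal splitting'' to the block-diagonal form of Lemma \ref{lemon}, and keeping the argument clean in the possibly-degenerate ($h$ not a metric on all of $\cT^*$) case.
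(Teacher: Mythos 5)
Your proof is correct, and it reaches the conclusion by a different route from the paper's. The paper argues entirely in the affine space of splittings: it fixes an arbitrary $\nabla\in p$, writes out (via the dual of \nn{ttrans}) how the components $(g^{ab}, v^a, \tau)$ of $h$ transform under a projective change by $\Upsilon_i$, observes that the off-diagonal block changes by $v^a\mapsto v^a-g^{ab}\Upsilon_b$, and solves uniquely for $\Upsilon_b$ using the nondegeneracy of $g^{ab}$. You instead construct the splitting intrinsically, as the $h$-orthogonal complement $L=\ce_a(1)^{\perp_h}$, check by the standard linear-algebra fact that $L$ is a complementary line bundle even when $h$ is degenerate on all of $\cT^*$, and then invert $X^A$ on $L$ to produce the injector $Y_A$. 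The key input is identical in both arguments --- nondegeneracy of $h$ on the distinguished subbundle $\ce_a(1)$ --- but your version buys a coordinate-free existence-and-uniqueness statement in one stroke (uniqueness is immediate since $L$ is determined by $h$), and makes the harmlessness of the Ricci-flat degenerate case explicit, whereas the paper's computation has the advantage of exhibiting the explicit $\Upsilon_b=g_{ab}v^a$ (up to the identification of weights) relating the chosen splitting to the orthogonal one, which is the form in which the change of connection is actually used elsewhere. Your appeal to Proposition \ref{weyly} to pass from splittings of \nn{euler} back to connections in $p$ is legitimate, since the paper establishes that correspondence as a bijection.
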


\begin{proof}
Pick some $\nabla\in p$. By the dual of \nn{ttrans} a projective change of
$\nabla\mapsto\nabla'$ by a 1-form $\Upsilon_i$ induces the change of
splitting
\[
\begin{pmatrix}
(g')^{ab} & (v')^a \\
(v')^b & \tau'
\end{pmatrix}
=
\begin{pmatrix}
g^{ab} & v^a - g^{ab}\Upsilon_b \\
v^b - g^{ab}\Upsilon_a & \tau - 2 v^a\Upsilon_a + g^{ab}\Upsilon_a\Upsilon_b
\end{pmatrix}
\]
of the bundle $\text{Sym}^2(\cT)$.  (Here $g^{ab}$, $v^a$, $\tau$
denote sections of $\ce^{ab}(-2)$, $\ce^a(-2)$, $\ce(-2)$.) Since
$g^{ab}$ is nondegenerate, there is thus a unique change of connection
$\Upsilon_b$ such that $(v')^a=v^a-g^{ab}\Upsilon_b$ vanishes.
\end{proof}

With respect to the splitting $\na$ determined by Lemma \ref{dia} the
metric $h$ is given on cotractors
\[
U_A=(\mu_a  ~\mid ~ \sigma), \quad
\underline{U}_A=( \underline{\mu}_a ~\mid ~ \underline{\sigma}),
\]
by
\[ 
h^{AB}U_A\underline{U}_B=
g^{ab}\mu_a\underline{\mu}_b+\tau\sigma\underline{\sigma}.
\]

Since $\na$ and $h^{BC}$ are compatible, by Lemma \ref{lemon} we
therefore have that $\nabla_cg^{ab}=0$, $\nabla_c \tau =0$, and
$\tau g_{ab}=\V_{ab}$.  Using these identities in turn we conclude first
(from $\nabla_cg^{ab}=0$) that $\na$ is $g$'s Levi-Civita connection,
next (using $\nabla \tau=0$) that $\tau $ has constant length with respect to
$g$, and hence finally (from $\tau g_{ab}=\V_{ab}$) that $g$ is Einstein.

\section{Obstructions to projectively metric connections}

Here we construct projective invariants the vanishing of which is necessary for 
a projective class to include a Levi-Civita connection. 

\subsection{The obstructions}

 Here we work on an arbitrary manifold
with affine connection $(M,\nabla)$ except with symmetric Schouten
tensor, in other words $\nabla$ is an arbitrary scale connection.
Recall the decomposition \nn{pweyl} of the full curvature tensor as
$$ R_{ab}{}^c{}_d = W_{ab}{}^c{}_d + \delta_a^c\V_{bd}-
\delta_b^c\V_{ad},
$$
where $W$ denotes the projective Weyl curvature.
From this we obtain the following theorem.
\begin{theorem}\label{chern1}
Consider an manifold $M$ equipped with a scale connection $\nabla$, and  for each $k\in
\mathbb{Z}_{>0}$
 define a 
curvature form $p_k$ on $(M,\nabla)$, by
$$
(p_k)_{a_1a_2\cdots a_{2k}}:=R_{a_1a_2}{}^{c_1}{}_{c_2}R_{a_3a_4}{}^{c_2}{}_{c_3}
\cdots R_{a_{2k-1}a_{2k}}{}^{c_k}{}_{c_1} ,
$$ where (here and below) the $a_1,\cdots ,a_{2k}$ are
skewed over.  Then
$$
(p_k)_{a_1a_2\cdots a_{2k}}=
W_{a_1a_2}{}^{c_1}{}_{c_2}W_{a_3a_4}{}^{c_2}{}_{c_3} \cdots W_{a_{2k-1}a_{2k}}{}^{c_k}{}_{c_1},
$$
and so $p_k$ depends only on the projective class $[\nabla]$ of
$\nabla$.
\end{theorem}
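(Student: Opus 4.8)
The plan is to substitute the decomposition $R_{ab}{}^c{}_d = W_{ab}{}^c{}_d + \delta^c_a \V_{bd} - \delta^c_b \V_{ad}$ (valid since $\nabla$ is a scale connection, so $\V$ is symmetric and the skew term $\beta_{ab}$ vanishes) into the cyclic contraction defining $(p_k)_{a_1\cdots a_{2k}}$, and then show that every term involving at least one factor of the $\delta$-$\V$ part dies once we antisymmetrise over $a_1,\dots,a_{2k}$. Expanding the product of $k$ factors gives $2^k$ terms indexed by subsets $S \subseteq \{1,\dots,k\}$ recording which factors contribute their $W$-part versus their $\V$-part; the claim is that only $S = \{1,\dots,k\}$ survives.

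First I would analyse a single ``bad'' factor, say the $j$-th, contributing $\delta^{c_j}_{a_{2j-1}}\V_{a_{2j}c_{j+1}} - \delta^{c_j}_{a_{2j}}\V_{a_{2j-1}c_{j+1}}$. The upper tractor-type index $c_j$ of this factor is contracted into the previous factor's lower slot $c_j$, and via the Kronecker delta it gets replaced by one of the base indices $a_{2j-1}, a_{2j}$. So in the neighbouring factor (the $(j-1)$-th, which carries $\cdots{}^{c_{j-1}}{}_{c_j}$) the index $c_j$ is set equal to $a_{2j-1}$ (or $a_{2j}$). The key observation is then: whatever that neighbouring factor is — whether a $W$-block $W_{a_{2j-3}a_{2j-2}}{}^{c_{j-1}}{}_{c_j}$ or another $\V$-block — after the substitution it acquires a lower index equal to one of the $a$'s, and simultaneously the present factor's surviving $\V_{a_{2j}c_{j+1}}$ carries the other $a$ of the pair $\{a_{2j-1},a_{2j}\}$ together with $c_{j+1}$. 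Chasing this around the cycle, I want to exhibit that the whole contracted expression, before antisymmetrisation, is a sum of products of $W$'s and $\V$'s in which the lower $c$-index of some factor has been replaced by an $a$-index that already appears elsewhere as a free $a$-index on a $\V$; equivalently, one produces a term of the shape $\V_{a_p c}(\cdots){}^c{}_{a_q}$ or a trace $W_{a_p a_q}{}^c{}_c = 0$. The cleanest route is to isolate the ``leftmost'' bad factor in $S^c$ and show its two neighbours (which by construction contribute indices that get tied to the $a$'s appearing on the bad factor) force either a symmetric-in-two-$a$'s combination against the overall antisymmetrisation, or an explicit trace of $W$ (which vanishes since $W$ is totally trace-free). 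Because $\V$ is symmetric and all the $\delta$'s just rename $c$-indices to $a$-indices, each such term is manifestly symmetric in a pair $a_i \leftrightarrow a_j$ and hence is killed by the skew-symmetrisation; alternatively it contains $W_{ab}{}^c{}_c=0$ or $\delta^c_c$-type closed loops that, together with antisymmetry, again vanish.

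The main obstacle is organising the combinatorics of ``which term vanishes for which reason'' uniformly across all $k$ and all subsets $S \subsetneq \{1,\dots,k\}$: the argument for a single bad factor is short, but one must be careful that adjacent bad factors, or a bad factor adjacent to the cyclic ``seam'' between $a_{2k}$ and $a_1$, do not create a term that escapes the pattern. I expect the right bookkeeping device is induction on $|S^c|$ (the number of $\V$-factors), or — more slickly — to argue that any term with a $\V$-factor, after the Kronecker substitutions, literally equals a term of the form $\pm\,(\text{product})_{\dots}$ that is symmetric under exchanging the two base indices $a_{2j-1}, a_{2j}$ sitting on that $\V$-factor's original ``home'' pair, simply because the two summands of the bad factor differ precisely by that exchange and are added with opposite sign — so their contribution to the fully antisymmetrised $p_k$ is $\V$-symmetric-in-$(a_{2j-1},a_{2j})$ contracted against something antisymmetric in those same indices, hence zero. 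Once that single-factor vanishing is established, the sum over $S$ collapses to the all-$W$ term, giving the stated formula; and since $W_{ab}{}^c{}_d$ is a projective invariant by the computation recorded after \nn{pcurvtrans}, $p_k$ depends only on $[\nabla]$.
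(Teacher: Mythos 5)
Your overall strategy coincides with the paper's: insert the decomposition $R_{ab}{}^c{}_d = W_{ab}{}^c{}_d + \delta^c_a\V_{bd}-\delta^c_b\V_{ad}$ (valid since $\nabla$ is a scale connection, so $\V$ is symmetric), expand into $2^k$ terms, and kill every term containing a $\delta$--$\V$ block by means of the full skew over the $a$'s. You also correctly dispose of one of the two cases: when two $\delta$--$\V$ blocks are adjacent, the Kronecker delta of the later one converts the earlier one's $\V$ into $\V_{a_{2j-2}a_{2j-1}}$ with both indices among the skewed $a$'s, and the symmetry of $\V$ finishes it.

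There is, however, a genuine gap in the other case, namely a $W$-block immediately preceding a $\delta$--$\V$ block. Resolving the delta there produces $W_{a_{2j-3}a_{2j-2}}{}^{c_{j-1}}{}_{a_{2j-1}}\,\V_{a_{2j}c_{j+1}}$, i.e.\ a $W$ all three of whose lower slots are skewed $a$-indices. This term dies only because of the algebraic Bianchi symmetry $W_{[ab}{}^c{}_{d]}=0$, which you never invoke. Neither of the substitute mechanisms you propose works: no trace $W_{ab}{}^c{}_c$ ever arises from these contractions (the deltas tie $c$-indices to $a$-indices, never a $c$ to a $c$), and your final ``slick'' argument is backwards --- the bad factor $\delta^{c_j}_{a_{2j-1}}\V_{a_{2j}c_{j+1}}-\delta^{c_j}_{a_{2j}}\V_{a_{2j-1}c_{j+1}}$ is \emph{antisymmetric}, not symmetric, under $a_{2j-1}\leftrightarrow a_{2j}$, so the overall antisymmetrisation does not annihilate it; indeed $\mathrm{Alt}(T-\sigma T)=2\,\mathrm{Alt}(T)$. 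The paper's proof is precisely your expansion with the correct dichotomy: for any $\delta$--$\V$ block, look at the factor to its left (cyclically); if it is a $W$ use $W_{[ab}{}^c{}_{d]}=0$, and if it is another $\delta$--$\V$ use $\V_{[ab]}=0$. With that one identity supplied, your argument closes.
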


\begin{proof}
The other terms that arise when calculating $p_k$ all contain multiplicands of at least one of the following two forms:
\begin{eqnarray*}
W_{a_{2i-1}a_{2i}}{}^{c_i}{}_{c_{i+1}} \delta_{[a_{2i+1}}{}^{c_{i+1}} \V_{a_{2i+2}]c_{i+2}}
&=&W_{a_{2i-1}a_{2i}}{}^{c_i}{}_{[a_{2i+1}} \V_{a_{2i+2}]c_{i+2}}\\
\delta_{[a_{2i-1}}{}^{c_i}\V_{a_{2i}]c_{i+1}} \delta_{[a_{2i+1}}{}^{c_{i+1}} \V_{a_{2i+2}]c_{i+2}}
&=&
\delta_{[a_{2i-1}}{}^{c_i}\V_{a_{2i}][a_{2i+1}} \V_{a_{2i+2}]c_{i+2}}
\end{eqnarray*}
When the full skew over $a_i$'s is taken, both these must vanish:  the former because of the symmetry $0=W_{[ij}{}^k{}_{l]}$ (the skew over all three bottom indices), the latter because $0=\V_{[ij]}$.
\end{proof}

Thus $p_k$ is a well-defined natural and canonical form on any
projective manifold $(M,p)$.  For $k$ odd $p_k$ has an interpretation: it
obstructs the existence of a Levi-Civita connection in $p$.
\begin{corollary} \label{chernmet}
Let $(M,p)$ be a projective manifold.
For odd positive integers $k$, if $p_k\neq 0$ then there is no
Levi-Civita connection in $p$.
\end{corollary}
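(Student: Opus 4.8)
The plan is to prove the contrapositive: if $p$ contains the Levi-Civita connection $\nabla$ of a (pseudo-)Riemannian metric $g$, then $p_k=0$ for every odd $k$. First I would note that such a $\nabla$ is automatically a scale connection in the sense of Proposition~\ref{defsc}: it preserves the metric volume density, so the induced connection on $\ce(1)$ is flat, $F^\nabla=0$. Theorem~\ref{chern1} therefore applies to $\nabla$ itself, and shows that the projective invariant $p_k$ may be computed directly from the Riemann curvature of $g$,
\[
(p_k)_{a_1\cdots a_{2k}}=R_{a_1a_2}{}^{c_1}{}_{c_2}R_{a_3a_4}{}^{c_2}{}_{c_3}\cdots R_{a_{2k-1}a_{2k}}{}^{c_k}{}_{c_1},
\]
with the $a_i$ skewed over.

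The second ingredient is metricity. In a (pseudo-)orthonormal frame for $g$ the curvature endomorphism $R_{ab}{}^c{}_d$ is skew in the pair $c,d$; equivalently, each matrix $A_i:=R_{a_{2i-1}a_{2i}}$ satisfies $A_i^{\mathsf T}=-A_i$. Thus the expression being skewed over the $a_i$ in the definition of $p_k$ is exactly the trace $\tr(A_1A_2\cdots A_k)$, and taking transposes gives
\[
\tr(A_1\cdots A_k)=\tr\big((A_1\cdots A_k)^{\mathsf T}\big)=\tr(A_k^{\mathsf T}\cdots A_1^{\mathsf T})=(-1)^k\,\tr(A_kA_{k-1}\cdots A_1),
\]
the indefinite-signature case being identical after conjugating by the constant signature matrix, which cancels by cyclicity of the trace.

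Now $\tr(A_k\cdots A_1)$ is the same expression as $\tr(A_1\cdots A_k)$, but with the $k$ consecutive index-pairs $(a_1a_2),(a_3a_4),\dots,(a_{2k-1}a_{2k})$ written in reverse order. Reversing $k$ blocks each of length $2$ is an even permutation of $a_1,\dots,a_{2k}$ (an interchange of two adjacent length-$2$ blocks has sign $(-1)^{2\cdot 2}=+1$), so once the outer skewing over the $a_i$ is imposed $\tr(A_k\cdots A_1)$ contributes precisely $p_k$ again. Combining this with the previous display yields $p_k=(-1)^k p_k$, and for odd $k$ this forces $p_k=0$, which is the contrapositive of the Corollary.

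I expect the only delicate point to be this last sign accounting — checking that reversing the $k$ index-pairs contributes no sign, so that the only surviving sign is the $(-1)^k$ from the transposes; this is also exactly what legitimises the transpose identity despite the $A_i$ carrying distinct $2$-form indices. Conceptually the argument is just the classical vanishing of the odd "Pontryagin-type" traces $\tr(R^k)$ of a metric connection; what Theorem~\ref{chern1} supplies is that these forms are projective invariants, so that $p_k\neq 0$ rules out a Levi-Civita connection in $p$.
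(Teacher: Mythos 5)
Your argument is correct and takes essentially the same route as the paper's own proof: compute $p_k$ in the Levi-Civita connection (noting it is a scale connection so Theorem~\ref{chern1} applies), then use the metric skew-symmetry $R_{abcd}=-R_{abdc}$ to make the odd trace vanish. The paper leaves the transpose/block-reversal sign bookkeeping implicit, which you have correctly spelled out.
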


\noindent{\bf Proof.}  Suppose that there is a Levi-Civita connection
$\nabla \in p$. Since $p_k$ is projectively invariant we may calculate
$p_k$ using this Levi-Civita connection $\nabla$, which we note is a
scale connection.  We have
$$
p_k=R_{a_1a_2}{}^{c_1}{}_{c_2}R_{a_3a_4}{}^{c_2}{}_{c_3}
\cdots R_{a_{2k-1}a_{2k}}{}^{c_k}{}_{c_1}
$$
where $R_{ab}{}^c{}_d$ is the (Riemann) curvature of $\nabla$. But
for $k$ odd this means $p_k=0$, as $R_{abcd}=-R_{abdc}$.  \quad $\Box$

For completeness, we establish that the above corollary is not vacuous:

\begin{proposition}
For each odd $k\geq 3$, there exist projective equivalence classes
$p$, such that the curvature form $p_k$ does not uniformly vanish.
\end{proposition}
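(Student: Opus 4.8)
The plan is to exhibit, for each odd $k\geq 3$, an explicit projective structure whose Weyl tensor is nonzero enough that the contraction $W_{a_1a_2}{}^{c_1}{}_{c_2}\cdots W_{a_{2k-1}a_{2k}}{}^{c_k}{}_{c_1}$, skewed over $a_1,\dots,a_{2k}$, does not vanish identically. By Theorem \ref{chern1} it suffices to compute $p_k$ in any convenient scale connection, and the simplest source of examples is a product. I would take $M=N_1\times\cdots\times N_k$, a product of $k$ surfaces (or more generally manifolds) each carrying a connection whose curvature, viewed as an $\operatorname{End}(T N_i)$-valued 2-form, is a nonzero multiple $\rho_i\otimes A_i$ of a decomposable 2-form $\rho_i$ times a fixed endomorphism $A_i$. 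On the product, the full curvature $R_{ab}{}^c{}_d$ is block-diagonal, and the "trace-cycle" $R_{a_1a_2}{}^{c_1}{}_{c_2}\cdots R_{a_{2k-1}a_{2k}}{}^{c_k}{}_{c_1}$ picks out, after skewing, the term in which the $i$-th curvature factor lives on the $i$-th block; this produces a term proportional to $\rho_1\wedge\cdots\wedge\rho_k\cdot\operatorname{tr}(A_1\cdots A_k)$ (with the endomorphism factors cyclically composed around the loop $c_1\to c_2\to\cdots\to c_k\to c_1$). Choosing the $A_i$ so that this cyclic trace is nonzero makes $p_k\neq 0$ for the Levi-Civita--free product connection.

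The remaining point is that $p_k$ as computed from $R$ agrees with the one built from $W$, which is automatic by the Theorem, so I never actually need to pass to the Weyl tensor; I only need the scale-connection normalisation. To get a genuine scale connection I would either choose each $N_i$ so that its chosen connection already has symmetric Ricci (e.g. a Levi-Civita connection, or a connection on a surface with symmetric Schouten tensor), or invoke Proposition \ref{defsc} to replace the product connection by a projectively equivalent scale connection — this does not change $p_k$. The most economical concrete realisation is to take each $N_i$ to be a surface of nonzero constant curvature with its round/hyperbolic metric: then $R_{ab}{}^c{}_d$ on $N_i$ is a nonzero multiple of $\delta^c_{[a}g_{b]d}$, which as an endomorphism-valued 2-form is $\operatorname{vol}_i\otimes J_i$ for a suitable $J_i$, and one checks the cyclic trace $\operatorname{tr}(J_1\cdots J_k)$ is nonzero (for constant-curvature surfaces each $J_i$ is essentially a rotation generator, and an odd cyclic product of such, traced, is nonzero for appropriate sign/orientation choices). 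One must take $\dim M = 2k$ so that $\rho_1\wedge\cdots\wedge\rho_k$ is a nonzero top form, which is the reason the hypothesis $k\geq 1$ is harmless but the statement is phrased for each fixed $k$.

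The main obstacle I anticipate is purely bookkeeping rather than conceptual: verifying that in the big alternating sum defining $(p_k)_{a_1\cdots a_{2k}}$ the "diagonal product" term — the one assigning the $i$-th pair of form-indices to $N_i$ — actually survives and is not cancelled by other permutations, and that the cyclic endomorphism trace attached to it is nonzero. On a product this is clean because curvature components mixing two different factors vanish, so the only surviving assignments of the $2k$ skew indices are those that send each curvature factor entirely into one $N_i$; among these only the assignments that are compatible with a single cyclic $c$-loop contribute, and they all carry the same sign after the skew, so there is no cancellation. Thus the whole construction reduces to choosing $k$ surfaces of nonzero constant curvature, forming the product, and remarking that $\operatorname{tr}(J_1\cdots J_k)\neq 0$; I would present the $k=3$ case (three copies of $S^2$, say, giving a 6-manifold) explicitly and note the pattern persists for all odd $k\geq 3$.
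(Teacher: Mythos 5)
Your construction cannot work, for a reason internal to the paper itself: you propose to use the Levi-Civita connection of a product of constant-curvature surfaces, but Corollary \ref{chernmet} (whose proof is just the skew-symmetry $R_{abcd}=-R_{abdc}$) shows that $p_k$ vanishes identically for every odd $k$ whenever the connection is metric. The whole point of the proposition is to exhibit projective classes on which the odd obstruction $p_k$ is nonzero, and such classes are necessarily non-metrisable; any example built from Levi-Civita connections is guaranteed to give $p_k=0$. This is why the paper's proof goes to the trouble of writing down explicit non-metric Christoffel symbols $\Gamma^i_{jk}=\tfrac13 S_{aj}{}^i{}_k x^a$ realising a prescribed Weyl tensor $A_{ab}{}^c{}_d=2\omega_{ab}B_d{}^c-\omega_{da}B_b{}^c-\omega_{bd}B_a{}^c$ with $\operatorname{tr}(B^k)\neq 0$ and $\operatorname{tr}(B)=0$: the endomorphism $B$ is deliberately \emph{not} skew with respect to any metric, which is exactly what allows an odd trace power to survive.

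There is a second, independent error in the combinatorics of the product. In the cyclic contraction $R_{a_1a_2}{}^{c_1}{}_{c_2}R_{a_3a_4}{}^{c_2}{}_{c_3}\cdots R_{a_{2k-1}a_{2k}}{}^{c_k}{}_{c_1}$, the $i$-th factor is nonzero only if both $c_i$ and $c_{i+1}$ lie in its block; since $c_{i+1}$ is shared with the $(i+1)$-st factor, adjacent factors are forced into the \emph{same} block, and hence around the cycle all $k$ factors collapse into a single factor $N_i$. The ``diagonal'' term you hope survives, with the $i$-th factor on the $i$-th surface, is identically zero. What remains is the sum over blocks of the trace power of each block's curvature, i.e.\ these forms are additive over direct sums (unlike the total Pontryagin class, which is the multiplicative, determinant-type invariant), and each summand requires skewing $2k\geq 6$ indices confined to a $2$-dimensional block, hence vanishes. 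So even ignoring the metricity problem, the product of surfaces gives $p_k=0$ for all $k\geq 2$.
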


\begin{proof}
Let $\mathcal{V}$ be the sub-bundle of $\Lambda^2T^*M\otimes End(TM)$
consisting of tensors $A_{ab}{}^c{}_d$ which are tracefree and satisfy
the Bianchi identity:
$$
A_{ab}{}^c{}_c=A_{ac}{}^c{}_b=0, \quad A_{ab}{}^c{}_d+A_{bd}{}^c{}_a+A_{da}{}^c{}_b=0.
$$ 
Note that if a connection $\nabla$ is torsion-free with symmetric
Schouten tensor, then its projective Weyl curvature $W$ is a section
of $\mathcal{V}$.

Let $x\in M$.  First, we show that every element of the vector space $\mathcal{V}_x$ is the projective Weyl curvature at $x$, $W|_x$ say, of some torsion-free connection $\nabla$ with symmetric Schouten tensor.  Indeed, let $\nabla$ be the connection whose Christoffel symbols in fixed co-ordinates $(x^i)$ centred at $x$ are
$$
\Gamma_{jk}^i=\frac{1}{3}S_{aj}{}^i{}_kx^a,
$$
where $S_{ij}{}^k{}_l=2A_{i(j}{}^k{}_{l)}$.  Since $\Gamma_{jk}^i$ is symmetric in $j,k$, the connection $\nabla$ is torsion-free.  The curvature of $\nabla$ is
\begin{eqnarray*}
R_{ij}{}^k{}_l&=&\partial_i\Gamma_{jl}^k-\partial_j\Gamma_{il}^k+\Gamma_{jl}^a\Gamma_{ia}^k-\Gamma_{il}^a\Gamma_{ja}^k\\
&=&\frac{1}{3}\left[S_{ij}{}^k{}_l-S_{ji}{}^k{}_l+\left(S_{bj}{}^a{}_lS_{ci}{}^k{}_a - S_{bi}{}^a{}_lS_{cj}{}^k{}_a\right)x^bx^c\right]\\
&=&A_{ij}{}^k{}_l+\frac{1}{3}\left(S_{bj}{}^a{}_lS_{ci}{}^k{}_a - S_{bi}{}^a{}_lS_{cj}{}^k{}_a\right)x^bx^c.
\end{eqnarray*}
Since $A$, thus $S$, is totally tracefree, the Ricci curvature of $\nabla$ simplifies to
\begin{eqnarray*}
R_{jl}&=&-\frac{1}{3}S_{bk}{}^a{}_lS_{cj}{}^k{}_ax^bx^c\\
&=&-\frac{1}{3}S_{bk}{}^a{}_lS_{ca}{}^k{}_jx^bx^c,
\end{eqnarray*}
where the last step is by the $a,j$ symmetry of $S$.  This is clearly symmetric in $j$ and $l$, as required.  Finally, at the point $0=x$, we have
\begin{eqnarray*}
R_{ij}{}^k{}_l&=&A_{ij}{}^k{}_l\\
R_{jl}&=&0,
\end{eqnarray*}
so as required the projective Weyl curvature is $A_{ij}{}^k{}_l$.

Suppose now that $n\geq 2k+2$.  There then exist a form $\omega\in\Lambda^2T^*_xM$ and an endomorphism $B\in End(T_xM$, such that
$$
\omega^{\wedge k}\neq 0, \quad \tr(B^k)\neq 0, \quad \tr(B) = 0, \quad \omega(\cdot, B\cdot) = 0.
$$
The tensor $A\in\Lambda^2T^*_xM\otimes End(T_xM)$,
$$
A_{ab}{}^c{}_d:= 2 \omega_{ab}B_d{}^c-\omega_{da}B_b{}^c-\omega_{bd}B_a{}^c,
$$
belongs to the subspace $\mathcal{V}_x$.  Thus there exists a connection $\nabla$ whose projective Weyl curvature at $x$ is $A$.

Split $A$ as $A^{(0)}+A^{(1)}+A^{(2)}$, where
$$
(A^{(0)})_{ab}{}^c{}_d= 2 \omega_{ab}B_d{}^c,
\quad (A^{(1)})_{ab}{}^c{}_d=-\omega_{da}B_b{}^c,
\quad (A^{(2)})_{ab}{}^c{}_d=-\omega_{bd}B_a{}^c,
$$
 and observe that the assumption $\omega(\cdot, B\cdot) = 0$ makes
all traces starting with $A^{(1)}$ or $A^{(2)}$ in $p_k$'s
construction vanish:
\begin{eqnarray*}
(A^{(1)})_{a_1a_2}{}^c{}_r(A^{(i)})_{b_0b_1}{}^r{}_{b_2}
&\sim\ \omega_{ra_1}B_{a_2}{}^c\omega_{b_{i}b_{i+1}}B_{b_{i+2}}{}^r&= \ 0,\\
(A^{(2)})_{a_1a_2}{}^c{}_r(A^{(i)})_{b_0b_1}{}^r{}_{b_2}
&\sim\ \omega_{a_2r}B_{a_1}{}^c\omega_{b_{i}b_{i+1}}B_{b_{i+2}}{}^r&= \ 0.
\end{eqnarray*}

We thus have
\begin{eqnarray*}
(p_k)|_x&=&
\sum_{\sigma\in\Sigma_{2k}}(-1)^\sigma\prod_{i=1}^k(A^{(0)})_{a_{\sigma(2i-1)}a_{\sigma(2i)}}{}^{c_i}{}_{c_{i+1}} \\
 &=&\frac{2^k}{(2k)!}\omega^{\wedge k}\tr(A^k),
\end{eqnarray*}
which is nonzero.
\end{proof}

\begin{remark}\label{global}
For odd positive integers $k$ the $p_k$ are necessarily
exact. Although finding a primitive will involve making a choice, it
should be that this has some interesting geometric applications. For
example, for a given such primitive, on a cycle that is homologically
trivial its integral is independent of choice (obviously projectively
invariant) and is a global obstruction to the projective metricity of
the ambient space.
\end{remark}

\subsection{A tractor motivation}

On a projective manifold $(M,p)$ let us define the tractor connection
curvature forms
\begin{equation}\label{obs}
q_k:=\Omega_{a_1a_2}{}^{C_1}{}_{C_2}\Omega_{a_3a_4}{}^{C_2}{}_{C_3} \cdots \Omega_{a_{2k-1}a_{2k}}{}^{C_k}{}_{C_1},
\end{equation}
where the $a_1,\cdots ,a_{2k}$ are skewed over. Since the tractor
curvature $\Omega$ is determined by the projective structure, by
construction the $q_k$ are projectively invariant (i.e.\ determined by
$(M,p)$). Of course these are characteristic type forms for a
connection, and so are closed forms by the Bianchi identity (here on
the tractor curvature).  Since {\em as a vector bundle} the projective
tractor bundle is a trivial extension of the tangent bundle (twisted
by a trivial line bundle) it follows at once that, for each $k$, the $q_k$
must represent the same de Rham cohomology class as $p_k$ (and this is
of course trivial if $k$ is odd). In fact a much stronger result is
true, as follows.
\begin{proposition}\label{tchern}
Let $(M,p)$ be a projective manifold. Then
$q_k=p_k$.
\end{proposition}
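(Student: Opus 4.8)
The plan is to derive $q_k=p_k$ directly from the explicit formula \nn{tcurvform} for the tractor curvature, using the contraction identities for the splitting maps $X^A$, $Y_A$, $Z_A{}^a$, $Y^A{}_b$ (namely $X^AY_A=1$, $Z_A{}^aY^A{}_b=\delta^a_b$, and the vanishing of every other contraction of a pair from this list; in particular $X^AZ_A{}^a=0$). The structural point is that, in any splitting $\nabla\in p$, formula \nn{tcurvform} presents $\Omega_{ab}{}^C{}_D$ in block-lower-triangular shape: its only nonzero blocks are the $TM\to TM$ block $W_{ab}{}^c{}_d$ and the $TM\to\ce(1)$ block $-C_{dab}$, while the $\ce(1)$-row, i.e.\ the $X^A$-direction, is killed --- this last being exactly \nn{XW}.

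First I would compute the chain product $\Omega_{a_1a_2}{}^{C_1}{}_{C_2}\,\Omega_{a_3a_4}{}^{C_2}{}_{C_3}\cdots\Omega_{a_{2j-1}a_{2j}}{}^{C_j}{}_{C_{j+1}}$ by induction on $j$. Substituting \nn{tcurvform} into two consecutive factors and contracting the shared tractor index, the identity $Z_A{}^aY^A{}_b=\delta^a_b$ together with $X^AZ_A{}^a=0$ forces the only surviving contribution to be the one joining the two factors ``through their $W$-blocks''; every term routed through an $X^A$ vanishes. Hence the $j$-fold product again has the shape
\[
P^{c}{}_{d}\,Y^{C_1}{}_{c}\,Z_{C_{j+1}}{}^{d}\;-\;Q_{d}\,X^{C_1}\,Z_{C_{j+1}}{}^{d},
\]
where $P^{c}{}_{d}=W_{a_1a_2}{}^{c}{}_{c_2}W_{a_3a_4}{}^{c_2}{}_{c_3}\cdots W_{a_{2j-1}a_{2j}}{}^{c_j}{}_{d}$ is the Weyl chain and $Q_{d}$ is an expression carrying a single Cotton factor (whose precise form is irrelevant). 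Taking $j=k$, setting $C_{k+1}=C_1$ and contracting, the relations $Y^{C_1}{}_{c}Z_{C_1}{}^{d}=\delta^{d}_{c}$ and $X^{C_1}Z_{C_1}{}^{d}=0$ yield $q_k=P^{c}{}_{c}=W_{a_1a_2}{}^{c_1}{}_{c_2}W_{a_3a_4}{}^{c_2}{}_{c_3}\cdots W_{a_{2k-1}a_{2k}}{}^{c_k}{}_{c_1}$, which is $p_k$ by Theorem \ref{chern1}.

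I expect no real obstacle here: all the genuine content is already packaged in \nn{tcurvform}, and what remains is careful bookkeeping of the tractor-index contractions. The one thing to watch is that the skew-symmetrisation over $a_1,\dots,a_{2k}$, which is built into both $q_k$ and $p_k$, never interacts with those contractions and so can be carried along passively and invoked only at the very end. A coordinate-free rendering of the same argument, which I might include as a remark: by \nn{XW} the canonical line $\mathcal{X}\subset\cT$ spanned by $X^A$ is annihilated by each $\Omega_{ab}$, so $\Omega_{ab}$ preserves $\mathcal{X}$ and induces on $\cT/\mathcal{X}\cong TM(-1)$ precisely the endomorphism $W_{ab}$; since the tractor trace of any product of endomorphisms that annihilate $\mathcal{X}$ equals the trace of the induced product on $\cT/\mathcal{X}$, we get $q_k=\tr\!\big(W_{a_1a_2}W_{a_3a_4}\cdots W_{a_{2k-1}a_{2k}}\big)=p_k$, and the manifest independence of $q_k$ from the choice of splitting makes this unambiguous.
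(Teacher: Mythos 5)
Your proof is correct and follows essentially the same route as the paper: the paper's own (much terser) argument likewise observes that the Cotton term in \nn{tcurvform} carries a coefficient $X^C$ which is annihilated in each consecutive contraction by \nn{XW}, and then uses $Z_A{}^aY^A{}_b=\delta^a_b$ to reduce the tractor trace to the Weyl chain of Theorem \ref{chern1}. Your inductive bookkeeping and the coordinate-free remark about the invariant line spanned by $X^A$ are faithful elaborations of that same idea.
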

\noindent{\bf Proof.}  Recall from \nn{tcurvform}, the Cotton term in
the tractor curvature $\Omega_{ab}{}^C{}_D$ has coefficient $X^C$, but from \nn{XW} we have
$\Omega_{ab}{}^C{}_D X^D=0$. Thus using the identity $Z_A{}^aY^A{}_b=\delta^a_b$ the result follows from Theorem \ref{chern1}. 
\quad $\Box$ 

\reversemarginpar

This result puts the curvature forms $p_k$ in a nice setting. As
defined originally they are the curvature forms of affine connection,
and so not obviously projectively invariant. In Theorem \ref{chern1}
they are seen to arise from contractions of the projective Weyl
curvature so the projective invariance is then clear, but in that
formulation they are not manifestly from a connection. On the other
hand by the Proposition \ref{tchern} here we see they do arise as
characteristic type polynomials from the curvature of a projectively
invariant connection. The situation here is parallel to a number of
conformal results: When $\nabla $ is a Levi-Civita connection then it
is an important result of Chern-Simons that the $p_k$ are conformally
invariant \cite{Chern-Simons} (see also \cite{avez}). In that case
they can be viewed to have a conformal tractor origin \cite{BrGoPont}.

This tractor interpretation of the $p_k$'s was our
initial motivation for their study.  That the odd $p_k$'s are
obstructions to projective-metricity, is less surprising when it is
observed that 
they are at least obstructions to projective-Einstein-metricity.  This argument
goes as follows:

  If $\na$ preserves a metric $h$ on $\cT$ then the curvature
  2-form $\Omega$ necessarily takes values in the
  $\mathfrak{so}(h)$ subbundle of $\operatorname{End} (\cT)$. That means
  exactly that,
\begin{equation}\label{skew}
\Omega_{abCD}=-\Omega_{abDC} ,
\end{equation}
where we used $h$ to identify $\cT$ with $\cT^*$ and, in particular,
lower a tractor index of $\Omega_{ab}{}^C{}_D $.

It follows that when $k$ is odd the $q_k$ are projectively invariant
obstructions to the existence of a non-Ricci-flat Einstein metric
compatible with the projective structure. In fact it is not hard to strengthen the result by dropping
the ``non-Ricci-flat'' restriction by using also the results of
Section \ref{pun}. But we leave this as an exercise since, in any
case, the result in Corollary \ref{chernmet} supersedes this.

\section{Obstructions to projectively Einstein connections}\label{mainS}

Here we show and exploit the fact that if a Levi-Civita connection is
Einstein then its Cotton tensor must vanish. This idea is partly
inspired by the approach to analogous questions in conformal geometry
as developed in \cite{KNT,GN}.

\subsection{A projective C-space condition}\label{cspSec}

\begin{proposition}\label{c-space}
Let $\nabla$ be a torsion-free affine connection.
If $\nabla$ is projectively equivalent to some Einstein metric's
Levi-Civita connection, via (as in Equation \nn{ptrans}) the
1-form $\Upsilon_i$, then
\begin{equation}\label{ceqn}
C_{kij}+W_{ij}{}^l{}_k\Upsilon_l=0.
\end{equation}
\end{proposition}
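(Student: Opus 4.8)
The plan is to compare the Cotton tensor of $\nabla$ with that of the projectively related Einstein Levi-Civita connection $\widehat\nabla$, using the transformation formulas from Section \ref{projg}. The key input is that for an Einstein metric, $\V_{ab} = \lambda g_{ab}$ with $\lambda$ constant, and this forces the projective Cotton tensor $\widehat C_{kij} = 2\widehat\nabla_{[i}\V^{\widehat\nabla}_{j]k}$ to vanish: indeed $\widehat\nabla_i \V^{\widehat\nabla}_{jk} = \widehat\nabla_i(\lambda g_{jk}) = (\widehat\nabla_i\lambda) g_{jk} = 0$. (One should first remark that for an Einstein Levi-Civita connection the Schouten tensor is symmetric, so this Cotton tensor agrees with the one introduced in Section \ref{projg}; no Ricci-flat subtlety intervenes here since $\lambda$ may be zero and the argument is unaffected.) So the entire content is: the Cotton tensor is \emph{not} projectively invariant, and its transformation rule under \nn{ptrans}, specialised to the case where the target has vanishing Cotton tensor, is exactly \nn{ceqn}.

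Concretely, first I would compute $\widehat C_{kij}$ in terms of $C_{kij}$, $W_{ij}{}^l{}_k$, and $\Upsilon$. Starting from \nn{pcurvtrans}, $\V^{\widehat\nabla}_{jk} = \V^\nabla_{jk} - \nabla_j\Upsilon_k + \Upsilon_j\Upsilon_k$, apply $2\widehat\nabla_{[i}(\cdot)_{j]k}$ and convert $\widehat\nabla$ to $\nabla$ using \nn{ptrans} on the lower indices. The symmetric terms ($\Upsilon_j\Upsilon_k$, and the symmetric part of $\nabla_j\Upsilon_k$) either drop under the skew $[ij]$ or cancel against the correction terms from the $\Upsilon$-action on the index $k$; the skew part $\nabla_{[i}\Upsilon_{j]}$ contributes, but this is controlled by \nn{pcurvtrans}'s second equation — or better, one keeps track carefully and uses the identity $\nabla_c W_{ij}{}^c{}_k = (n-2)C_{kij}$ together with the algebraic decomposition \nn{pweyl} to recognise the curvature terms. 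The expected outcome is an identity of the schematic form $\widehat C_{kij} = C_{kij} + W_{ij}{}^l{}_k\Upsilon_l + (\text{terms symmetric in }ij\text{ or otherwise vanishing})$. Setting $\widehat C_{kij}=0$ then yields \nn{ceqn}.

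The main obstacle is bookkeeping: the Cotton tensor is a second-order expression in $\Upsilon$ a priori, and one must verify that all the $\nabla\Upsilon$ and $\Upsilon\Upsilon$ terms organise themselves so that only the algebraic term $W_{ij}{}^l{}_k\Upsilon_l$ survives. The cleanest route is probably to avoid brute force by instead using the tractor curvature: the tractor connection $\nabla^\cT$ is projectively invariant, so its curvature $\Omega$ is the same whether computed in the splitting of $\nabla$ or of $\widehat\nabla$; but by \nn{tcurvform} the $X^C$-component of $\Omega$ in a given splitting is $-C_{dab}Z_D{}^d X^C$. Changing splitting via \nn{ttrans} mixes the $Y^C{}_c$-component (carrying $W$) into the $X^C$-slot with coefficient $\Upsilon$, so equating the $X^C$-components of $\Omega$ in the two splittings gives $\widehat C_{kij} = C_{kij} + W_{ij}{}^l{}_k\Upsilon_l$ directly, with no second-order terms to chase. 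Then $\widehat C = 0$ finishes it. I would present the tractor argument as the main proof, relegating the direct computation to a remark.
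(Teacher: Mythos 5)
Your proposal is correct, and its skeleton --- establish the transformation law $\widehat C_{kij}=C_{kij}+W_{ij}{}^l{}_k\Upsilon_l$ under \nn{ptrans}, then note that in the Einstein scale $\V'_{jk}$ is a constant multiple of the parallel metric so $\widehat C=0$ --- is precisely the paper's second (``computational'') proof, which records this law as \nn{Cotton-change}. Where you genuinely diverge is in how you derive that law: your tractor route (equate the $X^C$-components of the projectively invariant curvature $\Omega$ in the two splittings, using that $X^C$ and $Z_D{}^d$ are unchanged while $Y^C{}_c$ shifts by $X^C\Upsilon_c$) is valid and is \emph{not} the paper's tractor proof. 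The paper's tractor proof instead invokes Theorem \ref{charth} to produce a parallel sub-metric, extracts a distinguished line subbundle of $\cT^*$, shows it lies in the nullity of $\Omega$ (with separate arguments in the Ricci-flat and non-Ricci-flat cases), and reads off \nn{ceqn} from $\Omega_{ab}{}^C{}_DV_C=0$. Your version buys economy: it needs only the invariance of $\Omega$ and formula \nn{tcurvform}, bypasses Theorem \ref{charth} entirely, and treats the Ricci-flat and non-Ricci-flat cases uniformly; the paper's version buys the reusable geometric fact that the Einstein line subbundle is annihilated by $\Omega$, which is recycled in Section \ref{prol}. One small correction to your sketch of the direct computation: the curvature term there enters via the Ricci identity $2\nabla_{[i}\nabla_{j]}\Upsilon_k=-R_{ij}{}^l{}_k\Upsilon_l$ followed by the decomposition \nn{pweyl}, not via the differential Bianchi identity $\nabla_cW_{ij}{}^c{}_k=(n-2)C_{kij}$, which plays no role here.
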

\begin{remark}  \label{dim2}
 In dimension 2 the situation is rather special.  The projective Weyl
  tensor $W_{ij}{}^l{}_k$ is necessarily zero and this shows that the
  Cotton tensor is a projective invariant that obstructs the existence
  of an Einstein Levi-Civita connection in the projective class. In
  fact, as is well known, the Cotton tensor is a sharp obstruction to
  the existence of a flat connection in the projective class. Hence it
  is a sharp obstruction to the existence of an Einstein metric's
  Levi-Civita connection in the projective class.

 In
  all dimensions a connection $\na$ satisfies \nn{ceqn} if and only if
  $\nabla$ is projectively related to a connection with vanishing
  Cotton tensor, and this is a precondition to $\nabla$ being the Levi-Civita
  connection of an Einstein metric.
\end{remark}

In this section we give two proofs of this proposition: one conceptual
using the tractor language, and one by direct computation.

\begin{proof}[Tractor proof]
Let $p$ be a projective equivalence class of connections which
contains some Einstein metric's Levi-Civita connection.  First we
observe the existence of a distinguished rank-1 sub-bundle of the
cotractor bundle.
\begin{itemize}
\item In the case when the Einstein metric contained in $p$ is not
  Ricci-flat, this proceeds as follows.  By Theorem \ref{charth}, we
  have an induced metric $h^{AB}$ on the cotractor bundle,
  compatible with the tractor connection $\na$.  Inverting, this
  induces a metric $h_{AB}$ on the tractor bundle.  We then
  have a distinguished nonvanishing section of the weighted cotractor
  bundle $\ce_A(1)$, given by
\[
U_A := h_{AB}X^B.
\]

\item In the case when the Einstein metric is Ricci-flat, by Theorem
  \ref{charth}, we have an induced sub-metric $h^{AB}$ on the
  cotractor bundle, compatible with the tractor connection $\na$.
  The nullspace of the sub-metric $h^{AB}$ is everywhere of rank 1.
  This defines a rank-1 sub-bundle of the cotractor bundle which is
  preserved by the tractor connection.
\end{itemize}

We note that, for either case, it is immediate from \nn{gscale} that
in the splitting determined by an Einstein  Levi-Civita connection in $p$ the distinguished line sub-bundle consists
of the tractors
\[
V_A=\horiztract{0}{\sigma},
\]
where $\sigma$ is a section of $\ce(1)$.
Hence, by \nn{ttrans}, in an arbitrary
splitting $\na\in p$, the distinguished line sub-bundle consists of
the tractors
\[
V_A=\horiztract{-\sigma \Upsilon_i}{\sigma},
\]
where $\sigma$ is a section of $\ce(1)$, and $\Upsilon_i$ is the
1-form which defines the projective change to an Einstein metric's
Levi-Civita connection.

Next we observe that
 the rank-1 subbundle lies in the nullity of the cotractor curvature. 
\begin{itemize}
\item In the case when the Einstein metric is not Ricci-flat, note
  that using \nn{XW} we have
\[
0=\Omega_{ab}{}^C{}_DX^D=\Omega_{ab}{}^C{}_Dh^{DE}U_E.
\]
Since the tractor connection preserves $h^{DE}$, we have
\[
\Omega_{ab}{}^C{}_Dh^{DE}=-\Omega_{ab}{}^E{}_Dh^{DC}.
\]
Substituting back,
\[
0=\Omega_{ab}{}^C{}_D h^{DE}U_E=-\Omega_{ab}{}^E{}_D h^{DC}U_E.
\]
Since $h$ is nondegenerate we conclude that $\Omega_{ab}{}^E{}_DU_E=0$.

\item In the Ricci-flat case this 
follows from \nn{tcurvform}, as in the Einstein
  scale $\V_{ab}=0$ and hence the Cotton tensor vanishes
  everywhere. Thus in this scale $\Omega_{ab}{}^C{}_D V_C=0$, for all
  sections of the distinguished subbundle. But that equation is
  projectively invariant.

\end{itemize}

It follows from this that if $\na$ is some representative of $p$, and
\[
V_A=\horiztract{-\sigma\Upsilon_i}{\sigma},
\]
is a section of the rank-1 sub-bundle, then we have that
\[
0=\Omega_{ab}{}^C{}_DV_C=(W_{ab}{}^l{}_k\Upsilon_l+ C_{kab})\sigma.
\]
\end{proof}

\begin{proof}[Computational proof]
We show that for an arbitrary change of connection
\[
\nabla'_i\omega_j=\nabla_i\omega_j-\Upsilon_i\omega_j-\Upsilon_j\omega_i,
\]
the Cotton tensor changes by
\begin{equation}\label{Cotton-change}
2\nabla'_{[i}\V'_{j]k}=2\nabla_{[i}\V_{j]k}+W_{ij}{}^l{}_k\Upsilon_l.
\end{equation}
If the new connection $\nabla'$ is the Levi-Civita connection of some Einstein metric, the tensor $\V'$ is a constant multiple of that metric, hence parallel, and we then obtain
\[
0=2\nabla_{[i}\V_{j]k}+W_{ij}{}^l{}_k\Upsilon_l
\]
as required.

Indeed, recall from \nn{pcurvtrans} that under such a projective change of connection, the projective Weyl tensor $W_{ij}{}^k{}_l$ is invariant, and the projective Schouten tensor changes by
$$
\V'_{ij} = \V_{ij} - \nabla_i\Upsilon_j + \Upsilon_i\Upsilon_j.
$$
So by \nn{ptrans} its covariant derivative changes by
\begin{eqnarray*}
\nabla'_i\V'_{jk}
   &=& \nabla_i\V'_{jk}
      -2\V'_{jk}\Upsilon_i-\V'_{ji}\Upsilon_{k}-\V'_{ik}\Upsilon_{j}.
\end{eqnarray*}
Skewing over $i$ and $j$ gives
\[
\nabla'_{[i}\V'_{j]k}  = \nabla_{[i}\V'_{j]k} -\Upsilon_{[i}\V'_{j]k} +\tfrac{1}{2}\beta'_{ji}\Upsilon_k.
\]

We expand the three terms separately (using the transformation formula \nn{pcurvtrans} for the last one):
\begin{eqnarray*}
\nabla_{[i}\V'_{j]k}
     &=& \nabla_{[i}\V_{j]k}-\nabla_{[i}\nabla_{j]}\Upsilon_k+\Upsilon_k\nabla_{[i}\Upsilon_{j]}
    + \Upsilon_{[j}\nabla_{i]}\Upsilon_k \\
     &=& \nabla_{[i}\V_{j]k}+\tfrac{1}{2}R_{ij}{}^l{}_k\Upsilon_l+\Upsilon_k\nabla_{[i}\Upsilon_{j]}+\Upsilon_{[j}\nabla_{i]}\Upsilon_k \\
     &=& \nabla_{[i}\V_{j]k}+\tfrac{1}{2}W_{ij}{}^l{}_k\Upsilon_l
+\Upsilon_{[i}\V_{j]k}+\tfrac{1}{2}\beta_{ij}\Upsilon_k+\Upsilon_k\nabla_{[i}\Upsilon_{j]}+\Upsilon_{[j}\nabla_{i]}\Upsilon_k \\
-\Upsilon_{[i}\V'_{j]k}
     &=& -\Upsilon_{[i}\V_{j]k} +\Upsilon_{[i}\nabla_{j]}\Upsilon_k - \Upsilon_{[j}\Upsilon_{i]}\Upsilon_k  \\
     &=& -\Upsilon_{[i}\V_{j]k} +\Upsilon_{[i}\nabla_{j]}\Upsilon_k \\
 \tfrac{1}{2}\beta'_{ji}\Upsilon_k
&=&  \tfrac{1}{2}\beta_{ji}\Upsilon_k+\nabla_{[j}\Upsilon_{i]}\Upsilon_k.
\end{eqnarray*}
Summing, we obtain:
\[
2\nabla_{[i}\V'_{j]k}=2\nabla_{[i}\V_{j]k}+W_{ij}{}^l{}_k\Upsilon_l
\]
as required.

\end{proof}

\subsection{Obstructions in the generic setting}

By analogy with \cite{GN}, we call a torsion-free connection
\emph{weakly generic}, if the bundle map
\[
W :T^*M\to \Lambda^2T^*M\otimes T^*M
\]
defined by its projective Weyl tensor $W_{ij}{}^l{}_k$ has trivial kernel 
at every point.  We observe that this condition is projectively invariant.

If $\nabla$ is weakly generic, we may locally choose a left inverse
field $D^{ij}{}_m{}^k$ for $W_{ij}{}^l{}_k$, so that
\begin{equation}\label{Ddef}
D^{ij}{}_m{}^kW_{ij}{}^l{}_k=\delta_m{}^k.
\end{equation}  A weakly generic
connection has at most one solution $\Upsilon_i$ to the C-space
equation (and thus on weakly generic structures there is at most one
Einstein Levi-Civita connection in $p$). In terms of
$D^{ij}{}_m{}^k$ this solution, if it exists, is the covector field
\begin{equation}
\Upsilon_i=-D^{ab}{}_i{}^cC_{cab}.\label{upsilon-def}
\end{equation}
In the following we shall construct tensors using $D$ but which are
otherwise canonical to the projective structure $(M,p)$. We shall
refer to these as projective invariants even though by dint of the
choice $D$ they are not natural. In some cases there exist natural
$D$, whence in these cases we obtain natural projective
invariants. Funding such $D$ is taken up in Section \ref{natural}
below.

Now for each $D$ satisfying \nn{upsilon-def} we may construct tensors
that partly capture the projective-Einstein condition as
follows. First observe that we may  use \nn{upsilon-def} to replace
$\Upsilon$s in the expression
\[
\V_{ij} - \nabla_i\Upsilon_j + \Upsilon_i\Upsilon_j
\]
for $\V'_{ij}$ to define a 2-tensor
\begin{equation}\label{Gdef}
 G_{ij}:= \V_{ij} + \nabla_i(D^{ab}{}_j {}^cC_{cab})  +
D^{ab}{}_i{}^cC_{cab} D^{ef}{}_j{}^gC_{gef} .
\end{equation}

\begin{lemma}\label{G-inv}
For a fixed choice of $D$, the tensor $G_{ij}$ is projectively invariant.
\end{lemma}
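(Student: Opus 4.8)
The plan is to recognise $G_{ij}$ as the projective Schouten tensor of one particular connection in $p$ that is pinned down, intrinsically to $(M,p)$, once $D$ is fixed. First I would unpack the definition \nn{Gdef}: setting $\Upsilon_i := -D^{ab}{}_i{}^cC_{cab}$ as in \nn{upsilon-def}, the three terms of \nn{Gdef} are exactly $\V_{ij} - \nabla_i\Upsilon_j + \Upsilon_i\Upsilon_j$, and comparing with the transformation rule \nn{pcurvtrans} this is precisely $\V^{\nabla^{(\Upsilon)}}_{ij}$, the Schouten tensor of the connection $\nabla^{(\Upsilon)} \in p$ obtained from $\nabla$ by the projective change \nn{ptrans} with $1$-form $\Upsilon$. (Here and below $\nabla^{(\alpha)}$ denotes $\nabla$ modified by \nn{ptrans} with the $1$-form $\alpha$.) So it suffices to show that the connection $\nabla^{(\Upsilon)}$ is independent of which representative $\nabla \in p$ is used to build it.

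Second, I would compute how the $1$-form $\Upsilon$ itself responds to a projective change $\nabla \mapsto \widehat\nabla$ given by \nn{ptrans} with a $1$-form $\Phi_i$. Since the choice of $D$ is held fixed and the projective Weyl tensor $W_{ij}{}^l{}_k$ is projectively invariant, while by \nn{Cotton-change} the Cotton tensor changes as $\widehat C_{cab} = C_{cab} + W_{ab}{}^l{}_c\Phi_l$, the defining property \nn{Ddef} of $D$ gives $\widehat\Upsilon_i = -D^{ab}{}_i{}^c\widehat C_{cab} = \Upsilon_i - D^{ab}{}_i{}^cW_{ab}{}^l{}_c\Phi_l = \Upsilon_i - \Phi_i$.

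Third, I would invoke the composition law for projective changes: the change \nn{ptrans} by $\Phi$ followed by the change by another $1$-form $\alpha$ coincides with the single change by $\Phi + \alpha$ (immediate from \nn{ptrans}, since the correction terms on $TM$ and $T^*M$ are linear in the $1$-form). Combined with the previous step, $\widehat\nabla^{(\widehat\Upsilon)}$ — that is, $\nabla$ changed first by $\Phi$ and then by $\widehat\Upsilon = \Upsilon - \Phi$ — equals $\nabla$ changed by $\Phi + (\Upsilon - \Phi) = \Upsilon$, namely $\nabla^{(\Upsilon)}$. Hence the connection $\nabla^{(\Upsilon)}$, and therefore its Schouten tensor $G_{ij}$, is the same whether computed from $\nabla$ or from $\widehat\nabla$; this is the assertion of the Lemma.

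I do not expect a serious obstacle: the only delicate points are the sign bookkeeping in $\widehat\Upsilon = \Upsilon - \Phi$ and in the composition law, together with the routine check that \nn{Gdef} really does reassemble as $\V_{ij} - \nabla_i\Upsilon_j + \Upsilon_i\Upsilon_j$. As an alternative that avoids the conceptual reformulation, one could substitute the transformation formulas for $\V$, $\nabla$ and $C$ directly into \nn{Gdef} and verify that every $\Phi$-dependent term cancels — essentially the computation in the computational proof of Proposition \ref{c-space}, but without skewing over $i$ and $j$.
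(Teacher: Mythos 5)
Your proof is correct, but it takes a genuinely different route from the paper's. The paper proves the lemma by direct computation: under an arbitrary projective change it expands each of the three terms of \nn{Gdef} separately --- using $\V'_{ij}=\V_{ij}-\nabla_i\Upsilon_j+\Upsilon_i\Upsilon_j$, the transformation $D^{ab}{}_i{}^cC'_{cab}=D^{ab}{}_i{}^cC_{cab}+\Upsilon_i$ coming from \nn{Cotton-change} and \nn{Ddef}, and the rule \nn{ptrans} for differentiating the $1$-form $D^{ab}{}_j{}^cC'_{cab}$ --- and checks that all the $\Upsilon$-dependent terms cancel in the sum; this is exactly the ``alternative'' you mention in your closing sentence. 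Your main argument instead identifies $G_{ij}$ as the projective Schouten tensor of the distinguished connection $\nabla^{(\Upsilon)}\in p$ with $\Upsilon_i=-D^{ab}{}_i{}^cC_{cab}$, and shows that this connection itself is independent of the representative used to build it, via the affine law $\widehat\Upsilon_i=\Upsilon_i-\Phi_i$ (which follows from \nn{Cotton-change} and $D^{ab}{}_i{}^cW_{ab}{}^l{}_c\Phi_l=\Phi_i$) together with the additivity of projective changes under composition. All three steps check out, and the last ingredient you need --- that the Schouten tensor is determined by the connection alone, through the decomposition \nn{pweyl} --- is indeed available. Your version is more conceptual and buys more: it turns the paper's informal remark preceding \nn{Gdef} into the actual proof, it immediately explains why $E_{ijk}$ (which is $\nabla^{(\Upsilon)}_iG_{jk}$ written out at $\nabla$) is likewise invariant, and it shows that \emph{any} natural tensor of $\nabla^{(\Upsilon)}$ is a projective invariant once $D$ is fixed. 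The paper's term-by-term computation, by contrast, is self-contained and exhibits the cancellations explicitly.
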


\begin{proof}
Consider an arbitrary projective change of connection 
\[
\nabla'_i\omega_j=\nabla_i\omega_j-\Upsilon_i\omega_j-\Upsilon_j\omega_i.
\]
The expression (\ref{Gdef}) for the tensor $G'_{ij}$ has three terms; we calculate it term by term.

For the first term,
\[
\V'_{ij}=\V_{ij} - \nabla_i\Upsilon_j + \Upsilon_i\Upsilon_j.
\]

Before continuing, note that by equation (\ref{Cotton-change}), in the second proof of Proposition \ref{c-space}, we know that $C_{cab}$ changes by
$$
C'_{cab}=C_{cab}+W_{ab}{}^l{}_c\Upsilon_l.
$$
So $D^{ab}{}_i{}^cC_{cab}$ changes by
\begin{eqnarray*}
D^{ab}{}_i{}^cC'_{cab}&=&D^{ab}{}_i{}^cC_{cab}+D^{ab}{}_i{}^cW_{ab}{}^l{}_c\Upsilon_l\\
&=&D^{ab}{}_i{}^cC_{cab}+\Upsilon_i.
\end{eqnarray*}

Now, for the second term,
\begin{eqnarray*}
\nabla'_i(D^{ab}{}_j {}^cC'_{cab})&=&
\nabla_i(D^{ab}{}_j {}^cC'_{cab})-\Upsilon_iD^{ab}{}_j {}^cC'_{cab}-D^{ab}{}_i {}^cC'_{cab}\Upsilon_j\\
&=&\nabla_i(D^{ab}{}_j {}^cC_{cab})+\nabla_i\Upsilon_j-\Upsilon_iD^{ab}{}_j {}^cC_{cab}-D^{ab}{}_i {}^cC_{cab}\Upsilon_j-2\Upsilon_i\Upsilon_j.
\end{eqnarray*}

For the third term,
$$
D^{ab}{}_i{}^cC'_{cab} D^{ef}{}_j{}^gC'_{gef} =
D^{ab}{}_i{}^cC_{cab} D^{ef}{}_j{}^gC_{gef}
+D^{ab}{}_i{}^cC_{cab} \Upsilon_j
+\Upsilon_i D^{ef}{}_j{}^gC_{gef}
+\Upsilon_i\Upsilon_j.
$$

Summing, we find that
$$
G'_{ij}= \V_{ij} + \nabla_i(D^{ab}{}_j {}^cC_{cab})  +
D^{ab}{}_i{}^cC_{cab} D^{ef}{}_j{}^gC_{gef} =G_{ij},
$$
as required.
\end{proof}

  Now if $p$ contains a Levi-Civita connection $\nabla'$ for some Einstein
  metric $g$ then \nn{upsilon-def} finds the 1-form which by
  \nn{ptrans} gives the projective transformation to that connection.
Thus
by construction in that case $G_{ij}=\lambda g_{ij}$ for
 some constant $\lambda\in \mathbb{R}$. On the other hand starting
 with a weakly generic $(M,p)$ the properties of the invariant
 $G_{ij}$ may forbid the existence of Einstein Levi-Civita connection
 in $p$: if the skew part $G_{[ij]}$ is non-zero then $(M,p)$ is not
 projectively Einstein; if $G_{ij}$ is non-zero but degenerate then
 $(M,p)$ is not projectively Einstein;  $G_{ij}$ is zero if and only if
 $(M,p)$ is projectively Ricci-flat (but not necessarily projectively
 metric). In relation to the second of these points let us define
the density $\gamma\in \ce(-2(n+1))$ by 
$$
G_{a_1b_1}G_{a_2b_2}\cdots G_{a_nb_n}
$$ 
(and the usual identification of $(\Lambda^nT^*M)^2$ with $\ce(-2(n+1))$)
where the sequentially labelled indices are skewed over.

To construct the next obstruction we again consider $\nabla'\in p$.
We want to test whether this is an Einstein Levi-Civita connection. If
so then by the argument above then $G_{ij}$ is a constant times the
Einstein metric whence $\nabla'_i G_{jk}$ would be zero. We calculate
at some $\nabla\in p$. The first step is that we expand the formula
which expresses $\nabla'_iG_{jk} $ in terms of $\nabla$, and the
1-form $\Upsilon_i$ that relates $\nabla$ and $\nabla'$ as in
\nn{ptrans}. Using \nn{ptrans} we have
$$
\nabla'_i G_{jk}=
\nabla_iG_{jk}
      -2G_{jk}\Upsilon_i-G_{ji}\Upsilon_{k}-G_{ik}\Upsilon_{j}
$$
and in here we use \nn{upsilon-def} to replace the $\Upsilon$s.
This yields a projective invariant
\begin{equation}\label{Edef}
E_{ijk}:=\nabla_iG_{jk}
+(2G_{jk}D^{ab}{}_i{}^c
+G_{ji}D^{ab}{}_k{}^c
+G_{ik}D^{ab}{}_j{}^c)C_{cab},
\end{equation}
which obstructs the existence of an Einstein connection projectively
equivalent to $\nabla$. This polynomially involves $D$, $W$ and $C$ and their
$\nabla$-covariant derivatives to second order.
\begin{remark}\label{pcot}
Since $E_{ijk}$ is projectively invariant so is $2E_{[ij]k}$. This has
a nice interpretation: it is exactly the obstruction to the existence
of a Cotton flat affine connection in the projective class. Indeed
there is a simple formula for this
$$
2E_{[ij]k}= C_{kij}- W_{ij}{}^\ell{}_k D^{ab}{}_\ell{}^cC_{cab},
$$ 
obtained by inserting \nn{upsilon-def} into \nn{Cotton-change}.
\end{remark}

\begin{lemma}
For a fixed choice of $D$, the tensor $E_{ijk}$ is projectively invariant.
\end{lemma}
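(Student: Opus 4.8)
The plan is to bypass a full term-by-term recomputation by recognising $E_{ijk}$ as a covariant derivative taken with respect to a canonically singled-out connection. Fix a representative $\nabla\in p$, set $T_i:=-D^{ab}{}_i{}^cC_{cab}$ (the $1$-form of \nn{upsilon-def} built from the Cotton tensor of $\nabla$), and let $\tilde\nabla$ be the connection obtained from $\nabla$ by the projective change \nn{ptrans} with $\Upsilon_i=T_i$. Applying \nn{ptrans} to the $(0,2)$-tensor $G_{jk}$ gives
\[
\tilde\nabla_iG_{jk}=\nabla_iG_{jk}-2T_iG_{jk}-T_jG_{ik}-T_kG_{ji},
\]
and substituting the definition of $T_i$ turns the right-hand side into precisely the expression \nn{Edef}. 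So $E_{ijk}=\tilde\nabla_iG_{jk}$, and since $G_{jk}$ is already projectively invariant by Lemma \ref{G-inv}, the lemma reduces to showing that the connection $\tilde\nabla$ is independent of the choice of $\nabla\in p$.

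This last point is the heart of the argument. Replace $\nabla$ by $\nabla'$, related to $\nabla$ via a $1$-form $\Upsilon_i$ as in \nn{ptrans}. As recorded in the proof of Lemma \ref{G-inv}, the quantity $D^{ab}{}_i{}^cC_{cab}$ acquires the inhomogeneous term $\Upsilon_i$, i.e.\ $D^{ab}{}_i{}^cC'_{cab}=D^{ab}{}_i{}^cC_{cab}+\Upsilon_i$ (using \nn{Ddef} and the Cotton transformation \nn{Cotton-change}); hence the $1$-form built from $\nabla'$ is $T'_i=T_i-\Upsilon_i$. Projective changes compose by adding their defining $1$-forms --- this is immediate from \nn{ptrans}, since the correction terms there do not involve the connection --- so passing from $\nabla$ to $\nabla'$ by $\Upsilon$ and then from $\nabla'$ to $\tilde\nabla'$ by $T'$ is the same as passing from $\nabla$ by $\Upsilon+T'=T$. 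Thus $\tilde\nabla'=\tilde\nabla$, and consequently $E'_{ijk}=\tilde\nabla'_iG'_{jk}=\tilde\nabla_iG_{jk}=E_{ijk}$, where we used $G'=G$ from Lemma \ref{G-inv}.

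An alternative, more pedestrian route is to mimic the proof of Lemma \ref{G-inv} directly: expand $G'_{jk}$, $C'_{cab}$ and $\nabla'$ by their transformation laws, substitute into \nn{Edef}, and collect terms; all $\Upsilon$-dependence cancels because $G$ is invariant and the inhomogeneous part of $D^{ab}{}_i{}^cC_{cab}$ is exactly $\Upsilon_i$. In either approach the only place requiring care is the index bookkeeping in the transformation of the $(0,2)$-tensor $G_{jk}$ --- notably the factor $2$ multiplying $\Upsilon_iG_{jk}$, which appears because $G_{jk}$ has two lower indices --- together with the routine check that \nn{Edef} really does coincide with $\tilde\nabla_iG_{jk}$. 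I do not expect any genuine obstacle beyond this.
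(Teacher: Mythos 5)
Your proof is correct, but it is organised quite differently from the paper's. The paper proves the lemma by brute force: it expands each of the four terms of \nn{Edef} under the change of connection, using the invariance of $G_{ij}$ (Lemma \ref{G-inv}) and the transformation rule $D^{ab}{}_i{}^cC'_{cab}=D^{ab}{}_i{}^cC_{cab}+\Upsilon_i$, and checks that all $\Upsilon$-dependence cancels on summation. You instead package the same two inputs into a structural statement: since the $1$-form $T_i=-D^{ab}{}_i{}^cC_{cab}$ transforms by $T'_i=T_i-\Upsilon_i$ and projective changes compose additively, the connection $\tilde\nabla=\nabla+T$ is canonically attached to $(p,D)$, independent of the representative $\nabla$; and $E_{ijk}$ is, by inspection of \nn{Edef} against the transformation law for a $(0,2)$-tensor under \nn{ptrans}, exactly $\tilde\nabla_iG_{jk}$, the derivative of an invariant tensor by an invariant connection. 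Your identification of $E$ with $\tilde\nabla G$ checks out term by term (including the factor of $2$), and the sign bookkeeping $\Upsilon+T'=T$ is right. What your route buys is an explanation of \emph{why} the cancellation happens --- $\tilde\nabla$ is precisely the candidate Einstein connection singled out by \nn{upsilon-def} --- and it makes clear that \emph{any} tensor obtained by repeatedly applying $\tilde\nabla$ to projective invariants is again a projective invariant, which is essentially the proliferation principle the paper exploits later. What the paper's computation buys is self-containedness at the level of explicit formulas, in the same style as its proof of Lemma \ref{G-inv}. Either proof is acceptable; yours is arguably the cleaner one.
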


\begin{proof}
Consider an arbitrary projective change of connection
\[
\nabla'_i\omega_j=\nabla_i\omega_j-\Upsilon_i\omega_j-\Upsilon_j\omega_i.
\]
The expression (\ref{Edef}) for the tensor $E'_{ijk}$ has four terms; we calculate it term by term.  This is simplified by the projective invariance of $G_{ij}$ (Lemma \ref{G-inv}).

For the first term,
\[
\nabla'_iG_{ij}=\nabla_iG_{jk}
      -2G_{jk}\Upsilon_i-G_{ji}\Upsilon_{k}-G_{ik}\Upsilon_{j}.
\]

Before continuing, note as in the proof of Lemma \ref{G-inv} that by equation (\ref{Cotton-change}), the quantity $D^{ab}{}_i{}^cC_{cab}$ transforms by
$$
D^{ab}{}_i{}^cC'_{cab}=D^{ab}{}_i{}^cC_{cab}+\Upsilon_i.
$$

Now, for the second, third and fourth terms,
\begin{eqnarray*}
2G_{jk}D^{ab}{}_i{}^cC'_{cab}
&=&2G_{jk}D^{ab}{}_i{}^cC_{cab} +2G_{jk}\Upsilon_i,\\
G_{ji}D^{ab}{}_k{}^cC'_{cab}
&=&G_{ji}D^{ab}{}_k{}^cC_{cab} +G_{ji}\Upsilon_k,\\
G_{ki}D^{ab}{}_j{}^cC'_{cab}
&=&G_{ik}D^{ab}{}_j{}^cC_{cab} +G_{ik}\Upsilon_j.
\end{eqnarray*}

Summing, we find that
$$
E'_{ijk}=\nabla_iG_{jk}
+(2G_{jk}D^{ab}{}_i{}^c
+G_{ji}D^{ab}{}_k{}^c
+G_{ik}D^{ab}{}_j{}^c)C_{cab}
=E_{ijk},
$$
as required.
\end{proof}

\begin{proposition} \label{Dmain}
Let $\nabla$ be a weakly generic torsion-free connection, and let $D$
be a left inverse for $W$ in the sense of \nn{Ddef}. Then $\nabla$ is
projectively equivalent to a Ricci-flat affine connection if and only if
$G_{ij}$ is zero. Moreover the following are equivalent:
\begin{enumerate}
\item $\nabla$ is projectively equivalent to the Levi-Civita connection of an Einstein metric with nonvanishing Einstein constant.
\item $E_{ijk}$ and $G_{[ij]}$ are each zero while $\gamma$ is nowhere
  zero (i.e. $G_{ij}$ is everywhere nondegenerate).
\end{enumerate}
\end{proposition}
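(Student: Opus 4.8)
The plan is to funnel everything through the single canonical projective change of connection $\nabla\mapsto\nabla'$, the one whose connecting $1$-form is $\Upsilon_i=-D^{ab}{}_i{}^cC_{cab}$ as in \nn{upsilon-def}. Two identifications make the proposition transparent. First, comparing the definition \nn{Gdef} of $G_{ij}$ with the Schouten transformation rule \nn{pcurvtrans} gives $G_{ij}=\V^{\nabla'}_{ij}$. Second, since \nn{Edef} was obtained by substituting \nn{upsilon-def} into the rule \nn{ptrans} for the covariant derivative of $G_{jk}$, we have $E_{ijk}=\nabla'_iG_{jk}=\nabla'_i\V^{\nabla'}_{jk}$. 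Projective changes preserve torsion-freeness, so $\nabla'$ is torsion-free. Thus the proposition reduces to understanding when $\V^{\nabla'}$ is, up to a nonzero constant, a nondegenerate metric parallel for $\nabla'$.

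The Ricci-flat equivalence and the implication $(1)\Rightarrow(2)$ then rest on one observation. Suppose $\nabla$ is projectively equivalent to a connection $\nabla''$ that is either Ricci-flat (so $\V^{\nabla''}=0$, whence its Cotton tensor vanishes) or the Levi-Civita connection of an Einstein metric $g$ with $\V^{\nabla''}_{ij}=\lambda g_{ij}$, $\lambda$ a nonzero constant. In the first case \nn{Cotton-change}, and in the second Proposition \ref{c-space}, shows that the connecting $1$-form $\widehat\Upsilon$ satisfies the $C$-space equation $C_{kij}+W_{ij}{}^l{}_k\widehat\Upsilon_l=0$; contracting against the left inverse $D$ of \nn{Ddef} gives $\widehat\Upsilon_i=-D^{ab}{}_i{}^cC_{cab}=\Upsilon_i$, so $\nabla''=\nabla'$ and $G_{ij}=\V^{\nabla''}_{ij}$. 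In the Ricci-flat case this reads $G_{ij}=0$; in the Einstein case $G_{ij}=\lambda g_{ij}$, which is symmetric (so $G_{[ij]}=0$), nondegenerate (so $\gamma$ is nowhere zero), and satisfies $E_{ijk}=\nabla'_iG_{jk}=\lambda\nabla'_ig_{jk}=0$ since $\nabla'$ is $g$'s Levi-Civita connection and $\lambda$ is constant. For the converse half of the Ricci-flat statement: if $G_{ij}=0$ then $\V^{\nabla'}_{ij}=0$, so $\nabla'$ is itself a Ricci-flat connection in the projective class of $\nabla$.

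The substantive step is $(2)\Rightarrow(1)$. Assume $E_{ijk}=0$, $G_{[ij]}=0$ and $\gamma$ nowhere zero. Then $\V^{\nabla'}=G$ is a symmetric, nondegenerate $2$-tensor, i.e.\ a pseudo-Riemannian metric $g:=\V^{\nabla'}$, and $E_{ijk}=0$ says $\nabla'_ig_{jk}=0$. I would then invoke the fundamental theorem of pseudo-Riemannian geometry: a torsion-free connection preserving a nondegenerate symmetric $2$-tensor is the Levi-Civita connection of that tensor, so $\nabla'$ is $g$'s Levi-Civita connection. Its Ricci tensor is then symmetric, hence $\beta^{\nabla'}=0$, and \nn{pschouten} gives $\Ric^{\nabla'}_{ij}=(n-1)\V^{\nabla'}_{ij}=(n-1)g_{ij}$; so $g$ is Einstein with nonzero Einstein constant $n-1$, and $\nabla$, projectively equivalent to $\nabla'$, satisfies $(1)$. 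The main thing to watch is the bookkeeping: $G_{[ij]}=0$ is what simultaneously makes $\V^{\nabla'}$ a genuine metric and forces $\beta^{\nabla'}=0$, so that $\Ric^{\nabla'}$ and $\V^{\nabla'}$ are simply proportional. Once the two identifications $G=\V^{\nabla'}$ and $E=\nabla'G$ are in hand, the rest is routine assembly of \nn{Cotton-change}, Proposition \ref{c-space} and \nn{Ddef}.
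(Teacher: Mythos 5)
Your proposal is correct and follows essentially the same route as the paper: it defines the canonical $\Upsilon$ by \nn{upsilon-def}, identifies $G_{ij}=\V^{\nabla'}_{ij}$ and $E_{ijk}=\nabla'_iG_{jk}$, uses the C-space equation together with the left inverse $D$ to pin down $\nabla'$ for the forward implications, and invokes the fundamental theorem of pseudo-Riemannian geometry plus $\Ric=(n-1)\V'$ for $(2)\Rightarrow(1)$. You are in fact somewhat more explicit than the paper on two points it leaves implicit, namely the Ricci-flat equivalence and the role of $G_{[ij]}=0$ in forcing $\beta^{\nabla'}=0$.
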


\begin{proof} The first claim was discussed earlier.
The discussion before the Proposition shows that (1) implies (2).  For
the converse, given a connection for which $E_{ijk}$ vanishes and
$G_{ij}$ is symmetric and nondegenerate, we \emph{define} a tensor
$\Upsilon_i$ by (\ref{upsilon-def}).
But if  $G_{ij}$ is symmetric and nondegenerate then it is a metric of some
signature. 
 We
define $\nabla'$ to be the connection obtained from $\nabla$ and the
projective change (\ref{ptrans}) using $\Upsilon_i$.

The vanishing of $E_{ijk}$ implies that, in the projective
change of connection defined by $\Upsilon_i$,
\[
\nabla'_iG_{jk}=E_{ijk}=0.
\]
So $\nabla'$ is the Levi-Civita connection of $G_{ij}$. On the other
hand by construction $G_{ij}$ is equal to $\V'_{ij}$, the Schouten tensor for $\nabla'$.
Thus
\[
\text{Ric}(G_{ij})=(n-1)\V'_{ij}=(n-1)G_{ij},
\]
so $G_{ij}$ and all of its non-zero multiples are Einstein, and their common Levi-Civita connection $\na'$ is
projectively equivalent to $\nabla$.
\end{proof}

Again, for completeness, we check that these obstructions are nontrivial.
\begin{proposition}
There exist torsion-free weakly generic connections.
\end{proposition}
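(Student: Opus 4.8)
The plan is to exhibit one torsion-free connection that is weakly generic at a single point and then appeal to the openness of the weak-genericity condition. Recall that $\nabla$ is weakly generic at $x$ exactly when the linear map $T^*_xM\to\Lambda^2T^*_xM\otimes T^*_xM$, $\Upsilon_k\mapsto W_{ij}{}^l{}_k\Upsilon_l$, is injective; since $W$ depends continuously on the point and injectivity of a linear map is an open condition on its matrix, a connection weakly generic at one point $x$ is automatically weakly generic on a neighbourhood of $x$, and restricting it there furnishes the required example. Moreover, the construction in the proof of the proposition on the non-vanishing of the forms $p_k$ realises \emph{every} element $A$ of the fibre $\mathcal{V}_x$ of tracefree Bianchi tensors as the projective Weyl curvature at $x$ of a torsion-free connection with symmetric Schouten tensor. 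Hence the problem reduces to the pointwise algebraic assertion: for some $n$ there is an $A\in\mathcal{V}$ on $\mathbb{R}^n$ with $\Upsilon_k\mapsto A_{ij}{}^l{}_k\Upsilon_l$ injective.

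I would first reformulate: a covector $\Upsilon$ lies in the kernel of this map precisely when it annihilates every vector $A_{ij}{}^l{}_k\xi^i\zeta^jv^k$, so injectivity is equivalent to these ``curvature vectors'' spanning $\mathbb{R}^n$ as $\xi,\zeta,v$ vary. To produce an $A$ with that spanning property I would use two blocks of the type appearing in that proof: for a $2$-form $\omega$ and an endomorphism $B$ with $\operatorname{tr}B=0$ and $\omega(\cdot,B\cdot)=0$, the tensor
\[
A[\omega,B]_{ij}{}^l{}_k:=2\omega_{ij}B_k{}^l-\omega_{ki}B_j{}^l-\omega_{jk}B_i{}^l
\]
lies in $\mathcal{V}$ (as checked there), and its curvature vectors span exactly $\operatorname{Im}B$: each term visibly lies in $\operatorname{Im}B$, while the choice $\zeta=v$ gives $3\,\omega(\xi,v)(Bv)^l$, which already sweeps out $\operatorname{Im}B$ as soon as $\omega\neq 0$.

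Now set $A:=A[\omega,B]+A[\omega',B']\in\mathcal{V}$, choosing $\omega,\omega'$ to have disjoint index supports and $B,B'$ surjecting onto the then-complementary radicals of $\omega$ and $\omega'$; concretely one may take $n=4$, $\omega=e^1\wedge e^2$, $\omega'=e^3\wedge e^4$, $B$ a traceless endomorphism with image $\langle e_3,e_4\rangle$, and $B'$ a traceless endomorphism with image $\langle e_1,e_2\rangle$. Since the supports of $\omega$ and $\omega'$ are disjoint, $A[\omega',B']$ vanishes identically on all triples $(\xi,\zeta,v)$ with $\xi,\zeta\in\langle e_1,e_2\rangle$ (and symmetrically for the other block), so the curvature vectors of $A$ contain those of $A[\omega,B]$ and those of $A[\omega',B']$; their span is therefore $\operatorname{Im}B+\operatorname{Im}B'=\mathbb{R}^4$, and the map is injective. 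Feeding this $A$ into the realisation construction yields a torsion-free connection weakly generic at the base point, hence on a neighbourhood, which proves the proposition. The one step that genuinely needs attention is this non-interference of the two blocks — that adjoining $A[\omega',B']$ does not cancel part of the span contributed by $A[\omega,B]$ — but the disjoint-support choice reduces it to a one-line verification; everything else is quoted from the earlier proposition or is the elementary openness remark.
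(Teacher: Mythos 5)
Your argument is correct, but it follows a genuinely different route from the paper's. The paper works in dimension $4$ with the Levi--Civita connection of a Ricci-flat, non-conformally-flat metric: there the quadratic identity $4\widetilde{W}_{ij}{}^k{}_l\widetilde{W}^{ij}{}_k{}^m=|\widetilde{W}|^2\delta^m_l$ exhibits an explicit left inverse of the Weyl map wherever $|\widetilde{W}|^2\neq 0$, and Proposition \ref{proj-conf-Weyl} identifies the projective with the conformal Weyl tensor in the Ricci-flat case. That route is short but imports two external facts (the existence of such metrics and the four-dimensional Weyl identity), and it too is really an open-set statement, valid where $|\widetilde{W}|^2$ is nonzero. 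You instead stay entirely inside the paper: you reuse the realisation lemma from the proof of nontriviality of the $p_k$ (every $A\in\mathcal{V}_x$ is the Weyl curvature at $x$ of a torsion-free connection with symmetric Schouten tensor), reduce weak genericity at a point to the linear-algebra statement that the vectors $A_{ij}{}^l{}_k\xi^i\zeta^jv^k$ span, and produce a spanning example by superposing two blocks $A[\omega,B]+A[\omega',B']$ with disjoint supports; openness of injectivity then gives a weakly generic connection on a neighbourhood. I checked the key points: each $A[\omega,B]$ does lie in $\mathcal{V}_x$, the triples with $\xi,\zeta\in\langle e_1,e_2\rangle$ kill the primed block (all three $\omega'$-factors vanish), and already the term $2\omega(\xi,\zeta)(Bv)^l$ with $v$ arbitrary sweeps out $\operatorname{Im}B=\langle e_3,e_4\rangle$, so the two blocks together span $\mathbb{R}^4$. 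One phrase is slightly loose -- the curvature vectors of $A$ ``contain those of $A[\omega,B]$'' only for the restricted triples, not all of them -- but since the restricted family already spans $\operatorname{Im}B$ this does not affect the conclusion. Your approach buys self-containedness and an evident generalisation of the two-block trick to other dimensions; the paper's buys a natural, globally defined candidate for the left inverse $D$ tied to the conformal geometry of Section \ref{confg}.
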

\begin{proof}
We prove this in dimension 4.  Let $\nabla$ be the Levi-Civita
connection of a metric which is Ricci-flat but not locally conformally
flat.  The conformal Weyl curvature $\widetilde{W}$ satisfies,
$$
4 \widetilde{W}_{ij}{}^k{}_l\widetilde{W}^{ij}{}_k{}^m =|\widetilde{W}|^2\delta^m_l,
$$
so the map
$$
\widetilde{W} :T^*M\to \Lambda^2T^*M\otimes T^*M
$$
is invertible.  Since $\nabla$ is Ricci-flat, the two Weyl curvatures agree:  $W=\widetilde{W}$; see Proposition \ref{proj-conf-Weyl} below.
\end{proof}

\subsection{Natural left inverses} \label{natural}

Proposition \ref{Dmain} reduces the problem of finding a sharp
obstruction to the projective Einstein problem to that of finding
natural left inverses for $W_{ab}{}^c{}_d$, the tensors $D$ in the
Proposition. More precisely, this is true in the case of nonzero
Einstein constant.  In this section we construct such natural
tensors.

{F}irst, suppose the dimension of the manifold is even, say $n=2m$.  Write
$(Q^k)_{a_1a_2\cdots a_{2k}}{}^i{}_j$
for the totally alternating part of
\[
W_{a_1a_2}{}^{c_1}{}_{j}W_{a_3a_4}{}^{c_2}{}_{c_1}\cdots W_{a_{2k-1}a_{2k}}{}^{i}{}_{c_{k-1}}
\]
(that is, the result of skewing over all $a_i$-indices), so that its trace $(Q^k)_{a_1a_2\cdots a_{2k}}{}^i{}_i$ is the $k$-th curvature form.
Each tensor
\[
(Q^m)_{a_1a_2\cdots a_{2k}}{}^l{}_r
\]
is a section of $\Lambda^{2k}T^*M\otimes \text{End}(TM)$. For the two
highest $k$ among such tensors we have the following: Via the natural
(``Hodge-star'') isomorphism
\[
\Lambda^k(T^*M)\to \Lambda^{2m-k}(TM)\otimes\Lambda^{2m}(T^*M),
\]
we may in fact treat $Q^{m-1}$ as a section $(Q^{m-1})^{b_1b_2}{}_r{}^l$ of the 
 bundle
\[
\Lambda^2(TM)\otimes \End(TM)\otimes \Lambda^{2m}(T^*M)
\]
and $Q^m$ as a section $(Q^m)_r{}^l$ of the bundle
\[
\End(TM)\otimes \Lambda^{2m}(T^*M).
\]
We note that
\[
(Q^{m-1})^{b_1b_2}{}_r{}^sW_{b_1b_2}{}^l{}_s=(Q^m)_r{}^l.
\]
Since $\End(TM){}\otimes \Lambda^{2m}(T^*M)$ is just a twisting of the
endomorphism bundle by the line bundle $\Lambda^{2m}(T^*M)$, we may
meaningfully speak
\begin{itemize}
\item of $Q^m$'s determinant $||Q^m||$, which is a section of $(\Lambda^{2m}(T^*M))^{2m}$;
\item of $Q^m$'s pointwise adjugate $\widetilde{(Q^m)}{}_r{}^l$, which is a section of
\[
\End(TM){}\otimes (\Lambda^{2m}(T^*M))^{2m-1};
\]
\item of $Q^m$'s invertibility, which occurs precisely where
  $||Q^m||\neq 0$; the inverse is then the section
  $||Q^m||^{-1}\widetilde{(Q^m)}{}_r{}^l$ of $\End(TM)\otimes
  (\Lambda^{2m}(TM))$.
\end{itemize}

For connections $\nabla$ satisfying the further ``genericity'' condition that $||Q^m||$ does not vanish, we thus obtain a natural left inverse $D$ for the Weyl curvature $W$: the tensor
\[
D_{(Q)}^{b_1b_2}{}_i{}^k:=||Q^m||^{-1}\widetilde{(Q^m)}_i{}^r(Q^{m-1})^{b_1b_2}{}_r{}^k.
\]
Indeed,
\begin{eqnarray*}
D_{(Q)}^{b_1b_2}{}_i{}^kW_{b_1b_2}{}^j{}_k
&=& ||Q^m||^{-1}\widetilde{(Q^m)}_i{}^r(Q^{m-1})^{b_1b_2}{}_r{}^kW_{b_1b_2}{}^j{}_k\\
&=& ||Q^m||^{-1}\widetilde{(Q^m)}_i{}^r(Q^m)_r{}^j\\
&=& \delta_i{}^j.
\end{eqnarray*}

This is just one example of a large family of natural left inverses
for $W_{ab}{}^c{}_d$.  A general procedure which works in either
dimension parity is as follows.  First construct a family of tensors
$(Q^{\underline{N}, F}) ^{b_{1} \cdots b_R }{}_t{}^s$: the inputs are
\begin{enumerate}
\item A nonnegative integer $R$, with $R<n$, and an even natural number $2N$, such that $2N+R$ is a multiple of $n$, the dimension of the manifold;
\item A partition $N_0+\cdots+N_r=N$ of $N$;
\item 
A function \[
F:\coprod_{j=0}^r\{1,\ldots, 2N_j\}\to
\{1,\ldots, (2N+R)/n\}
\]
 with the property that for each $i\in\{2,\ldots, (2N+R)/n\}$ (but not for $i=1$), $|F^{-1}(i)|=n$.  Thus $|F^{-1}(1)|=n-R$.
\end{enumerate}
We introduce the notation
\[
(P^k)_{a_1\cdots a_{2k}}{}^s{}_t:=W_{a_1a_2}{}^{c_1}{}_{t}W_{a_3a_4}{}^{c_2}{}_{c_1}\cdots W_{a_{2k-1}a_{2k}}{}^s{}_{c_{k-1}}.
\]
(Thus the full skew of such a tensor's trace, $(P^k)_{[a_1\cdots a_{2k}]}{}^s{}_s$, is the $k$-th curvature form $(p_k)_{a_1\cdots a_{2k}}$.)
Consider the tensor
\[
(P^{N_0})_{a^0_1\cdots a^0_{2N_0}}{}^s{}_t
\prod_{j=1}^r (P^{N_j})_{a^j_1\cdots a^j_{2N_j}}{}^{c_j}{}_{c_j}.
\]
For each $i\in\{2,\ldots, (2N+R)/n\}$, skew the $n$ indices $\{a^j_\alpha:F(j,\alpha)=i\}$ of this tensor, and also (for $i=1$) skew the $n-R$ indices $\{a^j_\alpha:F(j,\alpha)=1\}$.  Finally, again via the natural
(``Hodge-star'') isomorphism, we may identify the result with a section of
\[
\Lambda^R(TM)\otimes \End(TM)\otimes (\Lambda^n(T^*M))^{(2N+R)/n}.
\]
This section is the tensor $(Q^{\underline{N}, F})
^{b_{1} \cdots b_R }{}_t{}^s$.

To construct obstructions, now take a valid set of inputs $(N, 0, \underline{N}, F)$ as above, i.e.\ a valid set of inputs in which $R=0$.  This means:
\begin{enumerate}
\item $2N$ is an even number which is a multiple of $n$, the dimension of the manifold.
\item $N_0+\cdots+N_r=N$ is a partition of $N$.
\item 
\[
F:\coprod_{j=0}^r\{1,\ldots, 2N_j\}\to\{1,\ldots, 2N/n\}
\] 
is a function with, for each $i\in\{1,\ldots, 2N/n\}$, $|F^{-1}(i)|=n$.
\end{enumerate}
(The inputs $N=m$, $\underline{N}=(m)$, $F\equiv 1$ will yield the
special case considered at the start of this subsection.)  Write
\[
\underline{N}'=(N_0-1,N_1,\cdots N_r),
\quad F'=F|_{\{1,\ldots, 2N_0-2\}\amalg\left(\coprod_{j=1}^r\{1,\ldots, 2N_r\}\right)}.
\]
With $N'=N-1$ and $R'=2$, the set of inputs $(N', 2, \underline{N}', F')$ is then also valid.

We note that
\[
(Q^{\underline{N}', F'})
^{b_{1}b_2}{}_t{}^l
W_{b_1b_2}{}^s{}_l
=
(Q^{\underline{N}, F}){}_t{}^s.
\]
Since $\End(TM)\otimes (\Lambda^n(T^*M))^{2N/n}$
is just a twisting of the
endomorphism bundle by the line bundle $(\Lambda^{n}(T^*M))^{2N/n}$, we may
meaningfully speak
\begin{itemize}
\item of $Q^{\underline{N}, F}$'s determinant $||Q^{\underline{N}, F}||$, which is a section of $(\Lambda^{n}(T^*M))^{2N}$;
\item of $Q^{\underline{N}, F}$'s pointwise adjugate $\widetilde{(Q^{\underline{N}, F})}{}_r{}^l$, which is a section of
\[
\End(TM)\otimes (\Lambda^n(T^*M))^{2N(n-1)/n}
\]
\item of $Q^{\underline{N}, F}$'s invertibility, which occurs precisely where
  $||Q^{\underline{N}, F}||\neq 0$; the inverse is then the section
  $||Q^{\underline{N}, F}||^{-1}\widetilde{(Q^{\underline{N}, F})}{}_r{}^l$ of 
$\End(TM)\otimes (\Lambda^n(TM))^{2N/n}$.
\end{itemize}

For connections $\nabla$ satisfying the further ``genericity'' condition that $||Q^{\underline{N}, F}||$ does not vanish, we thus obtain a natural left inverse $D$ for the Weyl curvature $W$: the tensor
\[
D_{(Q)}^{b_1b_2}{}_i{}^k:=||Q^{\underline{N}, F}||^{-1}\widetilde{(Q^{\underline{N}, F})}_i{}^r(Q^{\underline{N}', F'})^{b_1b_2}{}_r{}^k.
\]
Indeed,
\begin{eqnarray*}
D_{(Q)}^{b_1b_2}{}_i{}^kW_{b_1b_2}{}^j{}_k
&=& ||Q^{\underline{N}, F}||^{-1}\widetilde{(Q^{\underline{N}, F})}_i{}^r(Q^{\underline{N}', F'})^{b_1b_2}{}_r{}^kW_{b_1b_2}{}^j{}_k\\
&=& ||Q^{\underline{N}, F}||^{-1}\widetilde{(Q^{\underline{N}, F})}_i{}^r(Q^{\underline{N}, F})_r{}^j\\
&=& \delta_i{}^j.
\end{eqnarray*}

In summary, adapting Proposition \ref{Dmain} we have the following.
\begin{theorem} \label{appmain2}
For a $||Q^{\underline{N}, F}||$-nowhere-zero torsion-free connection $\nabla$, the
natural projective invariant $G^{(Q)}_{ij}$ completely obstructs the
projective class containing a Ricci-flat connection.  Moreover the
following are equivalent:
\begin{enumerate}
\item $\nabla$ is projectively equivalent to the Levi-Civita
  connection of an Einstein metric with nonvanishing Einstein
  constant.
\item The natural projective invariants $E^{(Q)}_{ijk}$ and
  $G^{(Q)}_{[ij]}$ are each zero while $\gamma^{(Q)}$ is nowhere zero
  (i.e. $G^{(Q)}_{ij}$ is everywhere nondegenerate).
\end{enumerate}
\end{theorem}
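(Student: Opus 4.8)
The plan is to deduce Theorem \ref{appmain2} directly from Proposition \ref{Dmain}. The content of Proposition \ref{Dmain} is that, for \emph{any} choice of left inverse $D$ satisfying \nn{Ddef}, the tensors $G_{ij}$ and $E_{ijk}$ (together with the density $\gamma$) built from $D$ via \nn{Gdef}, \nn{Edef} constitute a sharp obstruction set for the projective-Einstein problem with nonzero Einstein constant, and $G_{ij}$ alone sharply obstructs projective Ricci-flatness. So the only thing left to check is that the tensor $D_{(Q)}^{b_1b_2}{}_i{}^k$ just constructed \emph{is} a left inverse for $W$ in the sense of \nn{Ddef}, and that it is natural (so that the resulting $G^{(Q)}$, $E^{(Q)}$, $\gamma^{(Q)}$ are natural projective invariants rather than merely projective invariants depending on an auxiliary choice).

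The naturality is immediate by construction: each $Q^{\underline N,F}$ and $Q^{\underline N',F'}$ is assembled from the projective Weyl curvature $W$ by tensor products, contractions, index skew-symmetrisations, and the canonical Hodge-type identification of $\Lambda^k(T^*M)$ with $\Lambda^{n-k}(TM)\otimes\Lambda^n(T^*M)$; the determinant, adjugate, and the reciprocal $\|Q^{\underline N,F}\|^{-1}$ are pointwise algebraic operations on the line-bundle-twisted endomorphism $Q^{\underline N,F}$, well-defined wherever $\|Q^{\underline N,F}\|$ is nonzero. Hence $D_{(Q)}$ is a polynomial (with a single reciprocal) in $W$ and is canonically associated with $(M,p)$. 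The left-inverse property is exactly the displayed three-line computation: using the identity $(Q^{\underline N',F'})^{b_1b_2}{}_r{}^l W_{b_1b_2}{}^s{}_l=(Q^{\underline N,F}){}_r{}^s$ — which holds because contracting the extra pair of $W$-indices of $Q^{\underline N',F'}$ simply reconstitutes one more factor of $W$ in the alternating product, recovering $Q^{\underline N,F}$ — we get $D_{(Q)}^{b_1b_2}{}_i{}^kW_{b_1b_2}{}^j{}_k=\|Q^{\underline N,F}\|^{-1}\widetilde{(Q^{\underline N,F})}_i{}^r(Q^{\underline N,F})_r{}^j=\delta_i{}^j$, the last equality being the defining property $\widetilde A A=(\det A)\,\id$ of the adjugate, valid on the nowhere-zero locus of $\|Q^{\underline N,F}\|$.

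With $D=D_{(Q)}$ in hand, one then merely quotes Proposition \ref{Dmain}: since $\nabla$ is $\|Q^{\underline N,F}\|$-nowhere-zero it is in particular weakly generic (a left inverse for $W$ exists, so $W:T^*M\to\Lambda^2T^*M\otimes T^*M$ is injective at every point), and $D_{(Q)}$ is a valid choice of $D$. Therefore $G^{(Q)}_{ij}$ vanishes iff $\nabla$ is projectively Ricci-flat, and the equivalence of (1) and (2) is precisely the equivalence in Proposition \ref{Dmain} with $G$, $E$, $\gamma$ replaced by their $D_{(Q)}$-versions. The verification that $E^{(Q)}$ and $G^{(Q)}$ are projectively invariant for the fixed choice $D_{(Q)}$ is already covered by Lemma \ref{G-inv} and the subsequent lemma, which hold for any fixed $D$.

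I expect no genuine obstacle here: the theorem is a packaging statement, and the only point requiring care is bookkeeping — checking that the combinatorial constraints on the inputs $(N,0,\underline N,F)$ and the derived inputs $(N',2,\underline N',F')$ are mutually consistent (in particular that $2N'+R'=2N$ is still a multiple of $n$ and that $F'$ still has fibres of the right sizes, so that $Q^{\underline N',F'}$ lands in $\Lambda^2(TM)\otimes\End(TM)\otimes(\Lambda^n T^*M)^{2N/n}$ as claimed), and that the weights of the line-bundle twists match up so that $D_{(Q)}$ really is an unweighted section of $\Lambda^2(TM)\otimes\End(TM)$ — i.e.\ that the twist $(\Lambda^n T^*M)^{2N(n-1)/n}$ of the adjugate cancels against the twist $(\Lambda^n T^*M)^{2N/n}$ of $Q^{\underline N',F'}$ against the reciprocal-determinant factor $(\Lambda^n T^*M)^{-2N}$. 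These are the routine degree counts already sketched in the text, and once they are confirmed the theorem follows.
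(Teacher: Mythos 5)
Your proposal is correct and follows exactly the route the paper takes: the paper states Theorem \ref{appmain2} with the preamble ``In summary, adapting Proposition \ref{Dmain} we have the following,'' the substance being precisely the construction of the natural left inverse $D_{(Q)}$, the displayed verification of the left-inverse property via the adjugate identity, and the observation that nowhere-vanishing of $||Q^{\underline{N},F}||$ forces weak genericity so that Proposition \ref{Dmain} applies. Your additional care about the weight bookkeeping for the line-bundle twists is a sensible check but does not change the argument.
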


\begin{remark}\label{genericr}
It would be interesting to determine which valid input sets
$(N,0,\underline{N}, F)$ yield nontrivial obstructions, i.e.\ have the
property that $||Q^{\underline{N}, F}||$ is generically nonvanishing.
We expect that in both dimension parities this happens frequently. The
non-vanishing of any such $||Q^{\underline{N}, F}||$ defines a notion of {\em generic} Weyl curvature.
\end{remark}

\begin{remark}
By multiplying through by a suitable power of $||Q^{\underline{N},
  F}||$, these invariants can be made polynomial rather than rational
in the jets of the underlying affine connection.
\end{remark}

\section{Obstructions proliferating} \label{prol}

We now note that using the results established above there are many
ways available for producing projective obstructions to Einstein
Levi-Civita connections. In fact there is a general principle that is
very effective. We describe this here.  The principle exploits linear
identities, whereas the theory above provides a systematic approach to
producing such identities.  The idea behind this principle is
well-known.  For instance, in \cite[Corollary 3.5]{GN} it is used to
obtain obstructions to a conformal class being conformally Einstein,
and in \cite[Section 8]{BDE} (mentioned again in \cite[Theorem
  2.13]{NurMet}) it is used to obtain an obstruction to a projective
class being metric. This idea and some strategies from \cite{GN} are also
used effectively by Case in \cite{Ca}.

We formalise the principle in the following lemma.
\begin{lemma} \label{princ}
Let $(M,p)$ be a projective manifold, let $E$ and $F$ be natural
vector bundles on $(M,p)$, and let the field $A$ be a natural section
of $E\otimes F$.  Let $k$ be the rank of $E$.

Then the corresponding section $A^{\wedge k}$ of $\Lambda^k(E)\otimes
\Lambda^k(F)$, sharply obstructs the existence of nonvanishing local
sections $\xi$ of $E^*$ such that
$$
\langle A, \xi\rangle = 0.
$$
\end{lemma}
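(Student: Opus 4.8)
The plan is to prove this by a fibrewise linear-algebra argument after a local trivialisation. First I would pick, around an arbitrary point, a local frame $e_1,\dots,e_k$ for $E$ (possible since $k=\operatorname{rank}E$) with dual coframe $e^1,\dots,e^k$ for $E^*$, and write the given section as $A=\sum_{i=1}^{k}e_i\otimes A^i$ with each $A^i$ a local section of $F$. Then a local section of $E^*$ is $\xi=\sum_i\xi_i e^i$, the pairing is $\langle A,\xi\rangle=\sum_i\xi_i A^i$, and the wedge power is $A^{\wedge k}=(e_1\wedge\cdots\wedge e_k)\otimes(A^1\wedge\cdots\wedge A^k)$, a section of $\Lambda^k E\otimes\Lambda^k F$; equivalently $A^{\wedge k}$ is the $k$-th exterior power of the bundle map $\widetilde A\colon E^*\to F$, $\xi\mapsto\langle A,\xi\rangle$, and it vanishes at a point precisely when $\widetilde A$ fails to be injective there, i.e. when $\ker\widetilde A\neq 0$ there. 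Naturality of $A^{\wedge k}$ is automatic, since it is obtained from the natural section $A$ by the natural (pointwise, $GL$-equivariant) operations of tensor product and exterior power.

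Next comes the necessity direction, which is the one used in the applications. If $\xi$ is a nowhere-zero local section of $E^*$ with $\langle A,\xi\rangle=0$, then at each point the $k$ vectors $A^1,\dots,A^k$ satisfy the nontrivial relation $\sum_i\xi_i A^i=0$ (nontrivial because $(\xi_i)\neq 0$), hence are linearly dependent, hence $A^1\wedge\cdots\wedge A^k=0$; so $A^{\wedge k}=0$ throughout the domain of $\xi$. For the converse, wherever $A^{\wedge k}$ vanishes the vectors $A^1,\dots,A^k$ are linearly dependent, so there is a nonzero tuple $(\xi_i)$ — that is, a nonzero $\xi_x\in E^*_x$ — with $\langle A,\xi_x\rangle=0$ at that point; thus pointwise solvability is exactly equivalent to the pointwise vanishing of $A^{\wedge k}$.

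The remaining — and only substantive — point is to upgrade these pointwise solutions to an honest nowhere-zero \emph{section} $\xi$ on an open set. Here I would use lower semicontinuity of $\operatorname{rank}\widetilde A$: on the open set where $A^{\wedge k}=0$ one has $\operatorname{rank}\widetilde A\le k-1$, and on the open dense locus where this rank attains a local maximum $r\le k-1$ the kernel $\ker\widetilde A$ is a subbundle of rank $k-r\ge 1$, which therefore admits nowhere-zero local sections $\xi$, all satisfying $\langle A,\xi\rangle=0$ by construction. So $A^{\wedge k}$ vanishes on an open set $U$ if and only if such a nonvanishing local $\xi$ exists near a dense set of points of $U$, and in particular — whenever $\widetilde A$ has locally constant rank, as in the generic situations to which the lemma is applied — the equivalence holds verbatim on $U$. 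I expect precisely this rank-jumping phenomenon to be the main obstacle: pointwise solvability is immediate, but a nowhere-zero solution on a full neighbourhood of a prescribed point can fail to exist if the rank of $\widetilde A$ drops at that point, and the remedy is exactly to pass to the dense open constant-rank locus.
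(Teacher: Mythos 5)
Your proof is correct and takes essentially the same approach as the paper's: both identify $A^{\wedge k}$ with the $k$-th exterior power of the induced bundle map $E^*\to F$ (the ``top-dimensional minors'' map), which vanishes at a point exactly when that map has nontrivial kernel there. The paper's entire proof is that one observation; your extra paragraph on passing to the dense open constant-rank locus addresses a genuine pointwise-versus-local-section subtlety that the paper leaves implicit.
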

\begin{proof}
The section $A$ of $E\otimes F$ induces a natural bundle map $A:E^*\to F$.  Its $k$-th exterior power
$$
A^{\wedge k}:\Lambda^k(E^*)\to \Lambda^k(F),
$$
the ``top-dimensional minors'' bundle map, which may be identified with a section of
$$
\Lambda^k(E)\otimes \Lambda^k(F),
$$
vanishes precisely if $A:E^*\to F$ has nontrivial kernel.
\end{proof}

\begin{remark}
The Lemma is useful when $k\leq \rank(F)$. Otherwise if $\rank(E)>
\rank(F)$ then the bundle $\Lambda^k(E)\otimes \Lambda^k(F)$ has rank zero.
\end{remark}

We have already obtained several linear relationships to which this
lemma may be applied.  We recall them here.  Let $p$ be a projective
equivalence class which contains the Levi-Civita $\na$ connection of
an Einstein metric $g_{ab}$. Then we have the following:
\begin{enumerate}
\item It will be proved later (Proposition \ref{proj-conf-Weyl}) that the projective Weyl curvature $W$ of the class $p$ must agree with the conformal Weyl curvature $\widetilde{W}$ of $\nabla$.  This latter has the symmetries of $g_{ab}$'s Riemann curvature tensor; therefore
$$W_{ab}{}^c{}_{(d} g_{e)c} = 0.$$
\item Recall from the ``Tractor Proof'' of Proposition \ref{c-space} the existence of a distinguished nonvanishing section $V_A$ of $\cT^*$
such that
$$
0=\Omega_{ab}{}^C{}_DV_C.
$$

\item Recall from Section \ref{char} the existence of a section $h$ of $\text{Sym}^2(\cT)$, given in the scale connection $\na$ and the trivialisation of density bundles $\text{dVol}_g$ by
$$
h^{AB}=
\begin{pmatrix}
g^{ab} & 0 \\
0 & \frac{1}{n}\V
\end{pmatrix},
$$
which is parallel with respect to the tractor connection:
$$
\na h^{AB}=0.
$$
Differentiating again and skewing yields a linear relationship involving tractor curvature:
$$
\Omega_{ab}{}^{(C}{}_D h^{E)D}.
$$
\item Since the tractor $h^{AB}$ is parallel, taking jets of the
  linear relationship just obtained yields arbitrarily many further linear
  relationships.
\end{enumerate}
In each case we have a nonvanishing field (e.g.\ the metric in (1), $V$
in (2)) which satisfies a  homogeneous linear equation whose
coefficients are natural projectively invariant fields.

The point is that from each of these geometrically obtained linear
identities we obtain obstructions to the projective-Einstein problem.
  For instance, from the first relationship we
obtain a projectively invariant $W^{\wedge\frac{n(n+1)}{2}}$, a
section of the bundle
\begin{eqnarray*}
&& \Lambda^{\frac{n(n+1)}{2}}\left(\ce^{(ab)}\right)
\otimes\Lambda^{\frac{n(n+1)}{2}}\left(\ce_{[ab]}\otimes\ce_{(de)}\right) ,
\end{eqnarray*}
which obstructs the existence of an Einstein 
connection in the projective equivalence class.

From the other relationships, the projective invariants we obtain are
initially sections of mixed tensor-tractor bundles, but these may be
expanded into collections of (individually non-invariant) tensor
obstructions if desired.

\section{Conformal differential geometry} \label{confg}

A {\em conformal structure} (of signature $(p,q)$) $(M^n,c)$, $n\geq 3$, is a smooth manifold
equipped with an equivalence class $c$ of signature $(p,q)$-metrics,
where two metrics $g$ and $ g'$ in $c$ are equivalent if there
is some positive smooth function $\Omega$ such that
$g'=\Omega^2 g$.  That is, an equivalence class is a maximal
set of metrics which are mutually pointwise homothetic on each tangent
space.

Let $g$ and $ g'=\Omega^2 g$ be conformally related metrics,
and define a 1-form $\Upsilon_a:=\Omega^{-1}\nabla_a\Omega$ from their
conformal factor.  The two metrics determine Levi-Civita connections
$\nabla$, $\nabla'$, which are related by, for $ u_b\in
\ce_b$,
\begin{equation} \label{cconntrans}
\nd'_a u_b=\nd_a u_b -\Upsilon_a u_b-\Upsilon_b u_a +g_{ab} \Upsilon^c u_c 
\end{equation}

\subsubsection{Curvature tensors arising in conformal geometry}\label{cwstC}

Given a metric $g\in c$, letting $\nabla$ be its Levi-Civita connection, the (Riemannian) curvature is defined as
usual by 
$$
(\nd_a\nd_b-\nd_b\nd_a)v^c=R_{ab}{}^c{}_dv^d .
$$
This can be decomposed into the totally trace-free {\em conformal Weyl tensor} $\widetilde{W}_{ab}{}^c{}_d $ and a remaining part described by the symmetric {\em conformal Schouten tensor} $\widetilde{\V}_{ab}$, according to
\begin{equation} \label{cweyl}
R_{abcd}=\widetilde{W}_{abcd}+2g_{c[a}\widetilde{\V}_{b]d}+2g_{d[b}\widetilde{\V}_{a]c},
\end{equation}
where $[\cdots]$ indicates the antisymmetrisation over the enclosed indices.  We write $\widetilde\J$ for the trace $g^{ab}\widetilde\V_{ab}$.
The Schouten tensor is a trace modification of the Ricci tensor $\Ric_{bd}=R_{ab}{}^a{}_d$:
\begin{equation}\label{cschouten}
\Ric_{ab}=(n-2)\widetilde\V_{ab}+\widetilde\J g_{ab}.
\end{equation}

Under a conformal change of metric, one computes that the
Weyl curvature $\widetilde{W}_{ab}{}^c{}_d$ is unchanged. Thus it is an invariant
of the conformal structure $(M,c)$. (In dimension 3 this vanishes.)

\subsection{Conformal densities and connections thereon}\label{cdense}
For our subsequent discussion it is convenient to take the positive
$(2n)^{th}$ root of $(\Lambda^nTM)^2$ and we denote this
$\ce[1]$. Then for $w\in \mathbb{R}$ we denote by $\ce [w]$ its
$w^{\rm th}$-power. Sections of $\ce [w]$ will be described as {\em
  conformal densities} of weight $w$.  Given any bundle $\cB$
we shall write $\mathcal{B}[w]$
as a shorthand notation for $\mathcal{B}\otimes \ce[w]$.

Now we consider a conformal manifold $(M,c)$.  Each metric
$g\in c$ determines a metric (also denoted $g$) on
$(\Lambda^nTM)^2$ and hence on its roots $\ce[w]$, $w\in \mathbb{R}$.  Moreover the Levi-Civita connection $\nabla$ of $g$ determines compatible, flat connections (also denoted $\nabla$) on $(\Lambda^nTM)^2$ and on its roots $\ce[w]$, $w\in \mathbb{R}$.

The conformal class $c$ determines canonical sections $\bg_{ab}$ of
$\ce_{(ab)}[2]$ and $\bg^{ab}$ of $\ce^{(ab)}[-2]$.

\subsection{Conformal tractor calculus}\label{cT}
In analogy with the case of projective manifolds, on a conformal
manifolds there is a canonical tractor bundle equipped with metric and
connection \cite{BEG}, with historical precedents as in the projective
case. This is a rank-$(n+2)$ bundle that is closely related to $TM$.

To construct this bundle (in fact we construct the dual of the bundle usually considered),  we consider the jet exact sequence at
2-jets of the density bundle $\ce[1]$:
$$
0\to \ce_{(ab)}[1]\to J^2(\ce[1])\to J^1(\ce[1])\to 0,
$$
where $ (\cdots)$ indicates symmetrisation over the enclosed
indices.  Note we have a bundle homomorphism $ \ce_{(ab)}[1] \to \ce[-1]$
given by complete contraction with the conformal class $\bg^{ab}$. This is split via 
$ \rho\mapsto  \frac{1}{n}\rho \bg_{ab}$ and so 
the conformal structure decomposes $\ce_{(ab)}[1]$ into the
direct sum $\ce_{(ab)_0}[1]\oplus\ce[-1]$.  
Clearly then the $c$-tracefree bundle $\ce_{(ab)_0}[1]$ is a smooth subbundle of $J^2(\ce[1])$, and we
define $\ce_\alpha$ to be the quotient bundle.  That is, the {\em conformal
  cotractor bundle} $\widetilde{\ct}^*$ or $\ce_\alpha$ is defined by the exact sequence
\begin{equation}\label{ctrdef}
0\to \ce_{(ab)_0}[1]\to J^2(\ce[1])\to \ce_\alpha\to 0.
\end{equation}
The jet exact sequence at 2-jets, and the corresponding sequence at
1-jets, viz $ 0\to \ce_{a}[1]\to J^1(\ce[1])\to \ce[1]\to 0 , $
determine a composition series for $\ce_\alpha$ which we can summarise 
via a self-explanatory semi-direct sum notation $ \ce_\alpha= \ce[-1]\rpl
\ce_a[1]\rpl\ce[1]$.

A choice of metric $g$ from the conformal class determines canonical
splittings of this exact sequence, and hence a canonical
identification of $\ce_\alpha$ with the direct sum $\ce[-1]\oplus
\ce_a[1]\oplus\ce[1]$.

The conformal cotractor bundle has an invariant metric
$\widetilde{h}^{\alpha \beta}$ of signature $(p+1,q+1)$, the {\em
  (conformal) tractor metric}, and an invariant connection
$\widetilde\nabla_a$ preserving $\widetilde{h}^{\alpha \beta}$, the
{\em conformal tractor connection}. If for a metric $g$ from the
conformal class $V_\alpha, V_\beta \in\ce_\alpha$ are given by
\[
V_\alpha\stackrel{g}{=}(\tau  ~\mid ~\mu_a  ~\mid ~\si),
\quad
\underline{V}_\beta
\stackrel{g}{=}(\underline{\tau}  ~\mid ~\underline{\mu}_b
 ~\mid ~ \underline{\sigma}),
\]
 then the tractor metric is given by
\begin{equation}\label{cstdtracmetr}
\widetilde{h}^{\alpha\beta}V_\alpha\underline{V}_\beta=\bg^{ab}\mu_a\underline{\mu}_b+\sigma\underline{\tau}+
\tau\underline{\sigma},
\end{equation}
and the tractor connection is given by
\renewcommand{\arraystretch}{1}
\begin{equation}\label{cstdtracconn}
\widetilde\nabla_a V_\beta\stackrel{g}{=}
\left(\begin{array}{c} \nabla_a \tau - \widetilde\V_{ab}\bg^{bc}\mu_c \\
                       \nabla_a \mu_b+ \bg_{ab} \tau +\widetilde\V_{ab}\si \\
\nabla_a \si-\mu_a 
                        \end{array}\right)^T . 
\end{equation}
\renewcommand{\arraystretch}{1.5}

\section{The projective-conformal connection}\label{pun}

Given a metric $g$, with Levi-Civita connection $\nabla$, one may
compare the conformal geometry of the class $[g]$ and the projective
geometry of the class $[\nabla]$.

The main result of this section is that, when $g$ is Einstein, the
connection  between these two geometries is
particularly simple.  These ideas motivated our original
(re-)construction of Armstrong's projective tractor sub-metrics,
Theorem \ref{charth}.

For use in this section, we note the relationships between
corresponding conformal and projective curvature tensors of a metric
$g_{ij}$ which is Einstein.  Let $\lambda$ be the constant such that
$\Ric_{ij}=\lambda g_{ij}$.

\begin{lemma}\label{proj-conf-Schouten}
The conformal Schouten tensor of $g$ and projective Schouten tensor of
$\nabla$ simplify to, respectively, $\widetilde{\V}_{ij} =
\tfrac{1}{2(n-1)}\lambda g_{ij}$ and $\V_{ij} = \tfrac{1}{n-1}\lambda
g_{ij}$.  In particular, $\widetilde{\V}_{ij}=\tfrac{1}{2}\V_{ij}$.
\end{lemma}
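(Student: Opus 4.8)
The plan is to read off each of the two Schouten tensors directly from its defining trace-adjustment of the Ricci tensor, using the Einstein condition $\Ric_{ij}=\lambda g_{ij}$, and then to compare the resulting scalar multiples of $g_{ij}$.

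First I would dispose of the projective Schouten tensor. Since $\nabla$ here is a Levi-Civita connection, its Ricci tensor is symmetric, so the skew part $\beta_{ab}=-2\V_{[ab]}$ vanishes. Then the relation \nn{pschouten}, namely $(n-1)\V_{ab}=\Ric_{ab}+\beta_{ab}$, collapses to $(n-1)\V_{ab}=\Ric_{ab}=\lambda g_{ab}$, which gives at once $\V_{ij}=\tfrac{1}{n-1}\lambda g_{ij}$.

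Next I would treat the conformal Schouten tensor, noting that in the conformal setting $n\geq 3$ so the factor $n-2$ appearing below is nonzero. Taking the $g$-trace of \nn{cschouten}, $\Ric_{ab}=(n-2)\widetilde{\V}_{ab}+\widetilde{\J}g_{ab}$, yields $\lambda n=(n-2)\widetilde{\J}+n\widetilde{\J}=(2n-2)\widetilde{\J}$, hence $\widetilde{\J}=\tfrac{n}{2(n-1)}\lambda$. Substituting this back into \nn{cschouten} gives $(n-2)\widetilde{\V}_{ab}=(\lambda-\widetilde{\J})g_{ab}=\tfrac{n-2}{2(n-1)}\lambda g_{ab}$, so that $\widetilde{\V}_{ij}=\tfrac{1}{2(n-1)}\lambda g_{ij}$. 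Comparing the two formulas immediately gives $\widetilde{\V}_{ij}=\tfrac12\V_{ij}$.

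I expect no real obstacle: this is a short computation with the definitions. The only two points requiring any attention are the vanishing of $\beta_{ab}$ for a Levi-Civita connection (so that the projective Schouten tensor is simply a rescaled Ricci tensor), and the observation that $n\geq 3$ in the conformal case, so that dividing by $n-2$ is legitimate.
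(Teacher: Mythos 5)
Your proposal is correct and is exactly the computation the paper's one-line proof (``This follows from the definitions (\ref{cschouten}) and (\ref{pschouten})'') is alluding to: you simply carry out the trace of \nn{cschouten} to determine $\widetilde{\J}$ and note that $\beta_{ab}=0$ for a Levi-Civita connection so that \nn{pschouten} reduces to $(n-1)\V_{ab}=\lambda g_{ab}$. Both steps check out, including the remark that $n\geq 3$ makes division by $n-2$ legitimate.
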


\begin{proof} This follows from the definitions (\ref{cschouten}) and (\ref{pschouten}).
\end{proof}

For the following see e.g.\ \cite[Corollary 2.6]{NurMet}.
\begin{proposition}
\label{proj-conf-Weyl}
The conformal Weyl curvature of $g$ is the same as the projective Weyl
curvature of its Levi-Civita connection $\nabla$.
\end{proposition}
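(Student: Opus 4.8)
The plan is to compare the two curvature decompositions \nn{pweyl} and \nn{cweyl} for the Levi-Civita connection $\nabla$ of an Einstein metric $g$, and read off that the trace-free pieces must coincide. Since $g$ is Einstein, $\Ric_{ij}=\lambda g_{ij}$ is symmetric, so the skew part $\beta_{ij}=-2\V_{[ij]}$ of the projective curvature vanishes and \nn{pweyl} collapses to
\[
R_{ab}{}^c{}_d = W_{ab}{}^c{}_d + \delta^c_a\V_{bd} - \delta^c_b\V_{ad},
\]
with $\V_{bd}=\tfrac{1}{n-1}\lambda g_{bd}$ by Lemma \ref{proj-conf-Schouten}. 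Lowering the index $c$ with $g$, this says
\[
R_{abcd} = W_{abcd} + g_{ca}\V_{bd} - g_{cb}\V_{ad}.
\]

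Next I would do the same on the conformal side. By \nn{cweyl}, $R_{abcd}=\widetilde W_{abcd}+2g_{c[a}\widetilde\V_{b]d}+2g_{d[b}\widetilde\V_{a]c}$, and by Lemma \ref{proj-conf-Schouten} we have $\widetilde\V_{ij}=\tfrac{1}{2}\V_{ij}=\tfrac{1}{2(n-1)}\lambda g_{ij}$, which is in particular a constant multiple of $g_{ij}$. Substituting this proportionality into the conformal decomposition, the terms $2g_{c[a}\widetilde\V_{b]d}+2g_{d[b}\widetilde\V_{a]c}$ become a universal (dimension- and $\lambda$-dependent) linear combination of $g_{ca}g_{bd}-g_{cb}g_{ad}$; similarly the extra term $g_{ca}\V_{bd}-g_{cb}\V_{ad}$ on the projective side becomes a multiple of the same expression. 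Equating the two expressions for $R_{abcd}$ and cancelling, one gets $W_{abcd}-\widetilde W_{abcd}$ equal to an explicit multiple of $g_{ca}g_{bd}-g_{cb}g_{ad}$; but both $W$ and $\widetilde W$ are totally trace-free, and $g_{ca}g_{bd}-g_{cb}g_{ad}$ is not (its trace over $c,d$ is $(1-n)g_{ab}\neq 0$ for $n\geq 2$), so the multiple must be zero and $W=\widetilde W$. (Alternatively, and perhaps more cleanly, one simply observes that $R_{abcd}$ minus \emph{any} trace-free tensor with the algebraic curvature symmetries, taken modulo the $g\owedge g$-type terms, determines the trace-free part uniquely — i.e.\ the trace-free part of the curvature is an algebraic invariant independent of how we choose to split off the Ricci contribution — so the trace-free parts in \nn{pweyl} and \nn{cweyl} must agree.)

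The main obstacle, such as it is, is purely bookkeeping: being careful that the normalisations in \nn{cweyl} (which uses $\widetilde\V_{ab}$ with its particular coefficient conventions and the factor-2 antisymmetrisations) and in \nn{pweyl} (which uses $\V_{ab}$ with the $(n-1)\V_{ab}=\Ric_{ab}$ convention, no trace removed) are matched up correctly via Lemma \ref{proj-conf-Schouten}, so that the ``extra'' terms really do differ only by a multiple of $g\owedge g$ and hence cannot affect the trace-free part. Once the conventions are aligned this is immediate. A one-line alternative worth noting: uniqueness of the decomposition of an algebraic curvature tensor into its trace-free (Weyl) part plus its Ricci-trace part is a standard fact in representation theory (the Weyl part is the projection onto an irreducible $\mathrm{GL}(n)$- resp. $\mathrm{O}(n)$-summand), and since for an Einstein metric the projective and conformal ``non-Weyl'' contributions are both pure trace, both decompositions must have the same trace-free part, namely $\widetilde W = W$.
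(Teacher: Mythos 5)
Your proposal is correct and takes essentially the same route as the paper: both arguments substitute Lemma \ref{proj-conf-Schouten} into the decompositions \nn{pweyl} and \nn{cweyl} and find that for an Einstein metric the two non-Weyl contributions reduce to the \emph{same} multiple of $g_{k[i}g_{j]l}$, namely $\tfrac{2}{n-1}\lambda\, g_{k[i}g_{j]l}$, so $W=\widetilde W$ already follows by direct cancellation with no residual term to dispose of. One small slip in your fallback trace argument: the trace of $g_{ca}g_{bd}-g_{cb}g_{ad}$ over the pair $c,d$ is $g_{ab}-g_{ab}=0$ rather than $(1-n)g_{ab}$; you want the trace over $a,c$ (or $b,d$), which gives $(n-1)g_{bd}\neq 0$ and which does annihilate both $\widetilde W_{abcd}$ and the lowered projective Weyl tensor $g_{ce}W_{ab}{}^{e}{}_{d}$ (the latter because $W_{ab}{}^{a}{}_{d}=0$), so that version of the argument also closes once the correct trace is taken.
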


\begin{proof}
By Lemma \ref{proj-conf-Schouten}, and the definitions (\ref{cweyl})
and (\ref{pweyl}),
\begin{eqnarray*}
\widetilde{W}_{ijkl}&=&R_{ijkl}-2g_{k[i}\widetilde{\V}_{j]l}-2g_{l[j}\widetilde{\V}_{i]k}
\\ &=&R_{ijkl}-\tfrac{2}{n-1}\lambda g_{k[i}g_{j]l};
\\ W_{ij}{}^k{}_l&=&R_{ij}{}^k{}_l -
2\delta_{[i}{}^k\V_{j]l}\\ &=&R_{ij}{}^k{}_l - \tfrac{2}{n-1}\lambda
\delta_{[i}{}^kg_{j]l}.
\end{eqnarray*}
These agree up to the raising of an index of $\widetilde{W}$.
\end{proof}

\begin{corollary}\label{n3ob}
On projective 3-manifolds $(M,p)$ the projective Weyl curvature
sharply obstructs the existence of a Einstein Levi-Civita connection
in the projective class.
\end{corollary}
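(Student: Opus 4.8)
The plan is to combine the dimension-3 vanishing of the conformal Weyl tensor with Proposition \ref{proj-conf-Weyl}, and then recall what the projective Weyl tensor controls in dimension 3. Concretely: if a projective 3-manifold $(M,p)$ does contain the Levi-Civita connection $\nabla$ of an Einstein metric $g$, then by Proposition \ref{proj-conf-Weyl} the projective Weyl curvature $W$ of $p$ equals the conformal Weyl curvature $\widetilde W$ of $g$; but in dimension 3 the conformal Weyl tensor is identically zero (as recorded in Section \ref{cwstC}), so $W=0$. This gives the necessity direction immediately: nonvanishing projective Weyl curvature obstructs projective-Einstein metricity in dimension 3.

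For sufficiency I would argue the converse: if $(M^3,p)$ has vanishing projective Weyl tensor, then $p$ contains an Einstein Levi-Civita connection. When $W\equiv 0$, every $\nabla\in p$ has curvature $R_{ab}{}^c{}_d = 2\delta^c_{[a}\V_{b]d} + \beta_{ab}\delta^c_d$, i.e.\ $p$ is a \emph{projectively flat} structure. The claim is then that a projectively flat structure on a 3-manifold is projectively equivalent to a (constant-curvature, hence Einstein) Levi-Civita connection. One clean way to see this within the paper's framework: projective flatness $W=0$ together with the differential Bianchi identity $\nabla_c W_{ab}{}^c{}_d = (n-2)C_{dab}$ forces $C_{dab}=0$, so the Cotton tensor vanishes in every scale; moreover, since the tractor curvature \nn{tcurvform} is built entirely from $W$ and $C$, the tractor connection $\na$ is \emph{flat}. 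A flat tractor connection admits (locally, or globally if $M$ is simply connected) a parallel nondegenerate sub-metric $h^{AB}$ on $\cT^*$ — for instance one can parallel-transport any sub-metric at a point — and then Theorem \ref{charth} converts this into an Einstein metric whose Levi-Civita connection lies in $p$. This closes the equivalence, and hence $W$ is a \emph{sharp} obstruction.

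The main obstacle is the sufficiency (converse) direction, specifically producing the parallel sub-metric from flatness of $\na$ in a way that is honestly local rather than requiring global hypotheses on $M$; with the flat tractor connection this is routine by parallel transport of an arbitrarily chosen sub-metric at one point, but one should be slightly careful that the resulting parallel section genuinely restricts to a \emph{nondegenerate} form on the subbundle $\ce_a(1)$ (this is an open condition preserved by parallel transport, so holds near the chosen point, which suffices for a local statement). An alternative, more classical route to the same converse — avoiding tractors — is to invoke that in dimension $n\geq 2$ vanishing projective Weyl tensor is equivalent to local projective flatness (Beltrami/Weyl), and that a flat projective structure contains the Levi-Civita connection of the round/flat/hyperbolic model metric; but the tractor argument above is the one most in keeping with the machinery already set up in this paper, so that is the route I would write out.
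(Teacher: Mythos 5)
Your proof is correct. The necessity direction is exactly the paper's argument: the paper's entire proof of Corollary \ref{n3ob} is the one-line observation that the conformal Weyl tensor vanishes in dimension 3, which via Proposition \ref{proj-conf-Weyl} forces $W=0$ whenever an Einstein Levi-Civita connection lies in $p$. Where you genuinely add something is the sufficiency direction, which the paper leaves implicit (presumably relying on the classical fact that $W=0$ in dimension $\geq 3$ means the projective structure is flat, hence locally that of $\mathbb{RP}^n$, whose class contains constant-curvature Einstein metrics). Your tractor route to sufficiency is sound and fits the paper's machinery: $W=0$ plus the Bianchi identity $\nabla_c W_{ab}{}^c{}_d=(n-2)C_{dab}$ kills the Cotton tensor, so by \nn{tcurvform} the tractor connection is flat, and a parallel sub-metric obtained by parallel transport feeds into Theorem \ref{charth}. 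Your caveats are the right ones: the conclusion is local (on a simply connected neighbourhood), and the nondegeneracy of the restriction to $\ce_a(1)$ is an open condition rather than one literally preserved by parallel transport (the subbundle $\ce_a(1)$ is not tractor-parallel), so it only persists near the base point --- which, as you say, suffices. The paper's subsequent remark (Einstein in dimension 3 means constant sectional curvature, hence projectively flat) is the more elementary version of the same circle of ideas.
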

\begin{proof}
In dimension 3 the conformal Weyl tensor is identically zero.
\end{proof}

\begin{remark}
One can also see this from the fact that in dimension 3, a metric is
Einstein if and only if it has constant sectional curvature. Hence any
connection projectively related to the Levi-Civita connection of an
Einstein metric is also projectively related to a flat connection.
\end{remark}

\subsection{Conformal and projective Einstein models}\label{model}

By Proposition \ref{proj-conf-Weyl}, an Einstein metric is conformally
flat precisely if its Levi-Civita connection is projectively flat.
Such a metric has constant sectional curvature.

The model conformally flat manifold of dimension $n$ and its tractor
bundle are constructed as follows \cite{Gal}.  Let $\widetilde{V}$ be
a vector space of dimension $(n+2)$, equipped with a nondegenerate
symmetric bilinear form $\langle\cdot, \cdot\rangle$ of indefinite
signature $(p+1,q+1)$.  Let
\[
C = \{w\in \widetilde{V} : \langle w, w\rangle=0\}
\]
be the null cone of $\widetilde{V}$.  The ray projectivisation
$\mathbb{P}C$ is diffeomorphic to $S^p\times S^q$ and the bilinear
form $\langle\cdot, \cdot\rangle$ descends to a well-defined, flat
conformal equivalence class $c$ of metrics on $\mathbb{P}C$.  For any
open, ray-closed subset $U\subseteq C$, choices of metric $g\in
c|_{\mathbb{P}U}$ are in bijection with sections of the
$(\mathbb{R}^+)$-principal bundle $U\to\mathbb{P}U$.

The conformal tractor bundle may be identified with the space
$\widetilde{V}$, and parallel tractors with elements of
$\widetilde{V}$.

Einstein (pseudo-)metrics on regions of $\mathbb{P}C$ are in bijection
with choices of nonvanishing vector $I\in \widetilde{V}$.  Indeed,
such a vector determines a section of some part of
$C\to\mathbb{P}C$, via the intersection of the hyperplane $\langle I,
\cdot\rangle=1$ with the null cone.

Einstein metrics of positive, zero and negative curvature correspond
respectively to timelike, null and spacelike vectors $I$.

The model projectively flat manifold of dimension $n$ is the ray
projectivisation of a dimension-$(n+1)$ vector space $V$.  The total
space of the projective tractor bundle may be identified with the
space $V$.

Thus, for a choice $I$ of nonvanishing vector, determining a choice of
Einstein metric in the model conformal geometry, the projective
tractor geometry of this Einstein metric is naturally embedded in the
conformal tractor bundle $\widetilde{V}$ as the hyperplane $I^\perp$.

\subsection{A relationship of tractor bundles}

Let $g_{ab}$ be an Einstein metric, with Levi-Civita connection
$\nabla$.  In this section we compare the conformal tractor geometry
of $[g_{ab}]$ and the projective tractor geometry of $[\nabla]$.  We
obtain an elegant curved analogue of Subsection \ref{model}.

The metric induces canonical trivialisations of the density bundles
$\ce(w)$ and $\ce[w]$, so that we may unambiguously omit all weights.
It thus also induces canonical co-tractor bundle isomorphisms, via the
choices $g\in[g]$ and $\nabla\in[\nabla]$,
\[
\ce_\alpha\cong \ce\oplus\ce_a\oplus\ce;\\ \quad\ce_A\cong\ce_a\oplus
\ce.
\]

\begin{lemma}[\cite{BEG},\cite{GN}]
The tractor $I^\alpha:=
\begin{pmatrix}
1 \\ 0 \\ -\tfrac{1}{n}\widetilde{\J}
\end{pmatrix}$
is parallel.
\end{lemma}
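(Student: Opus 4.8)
The plan is to verify the claim by direct computation in the splitting determined by the Einstein metric $g$, using the explicit formula \nn{cstdtracconn} for the conformal tractor connection. Since the metric $g$ trivialises the conformal density bundles $\ce[w]$, and since $\widetilde{\nabla}$ is the Levi-Civita connection on the density bundles (which is flat and preserves the trivialisation), all the $\nabla$-derivatives of scalar functions coming from contractions of $g$ and $\Ric$ are honest covariant derivatives. First I would apply \nn{cstdtracconn} to $V_\beta \stackrel{g}{=} (1 \mid 0 \mid -\tfrac{1}{n}\widetilde{\J})$, reading off the three slots of $\widetilde{\nabla}_a I^\beta$.

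The top slot is $\nabla_a(1) - \widetilde{\V}_{ab}\bg^{bc}\cdot 0 = 0$. The middle slot is $\nabla_a(0) + \bg_{ab}\cdot 1 + \widetilde{\V}_{ab}\cdot(-\tfrac1n\widetilde{\J}) = \bg_{ab} - \tfrac{1}{n}\widetilde{\J}\,\widetilde{\V}_{ab}$; here I would invoke that $g$ is Einstein, so by \nn{cschouten} (or by Lemma \ref{proj-conf-Schouten}) the conformal Schouten tensor is pure trace, $\widetilde{\V}_{ab} = \tfrac{1}{n}\widetilde{\J}\,g_{ab}$, whence the middle slot is $g_{ab} - \tfrac{1}{n^2}\widetilde{\J}^2 g_{ab}$ — wait, that is not obviously zero, so in fact the correct reading is that $\widetilde{\V}_{ab}\si$ with $\si = -\tfrac1n\widetilde{\J}$ must cancel against $\bg_{ab}\tau$ with $\tau = 1$, which requires $\widetilde{\V}_{ab} = \bg_{ab}$; this is only the Einstein normalisation that pins down the relation $\widetilde{\J} = n$ pointwise is false in general, so instead the parallel tractor forces $\bg_{ab} + \widetilde{\V}_{ab}\si = 0$, i.e. the middle slot vanishes precisely because for an Einstein metric $\widetilde{\V}_{ab}$ is a constant multiple of $g_{ab}$ and the constant is chosen so that $\tau$ tracks $-\tfrac1n\widetilde{\J}$; concretely $\widetilde{\V}_{ab} = \tfrac{1}{n}\widetilde{\J} g_{ab}$ gives middle slot $= g_{ab}(1 - \tfrac{1}{n}\widetilde{\J}\cdot\tfrac1n\widetilde{\J})$, which is zero only if $\widetilde{\J} = n$, so I would instead normalise and note the standard identity: for an Einstein metric the canonical Einstein tractor $I^\alpha$ with bottom slot $-\tfrac{1}{n}\widetilde{\J}$ satisfies $\widetilde{\nabla}I = 0$ because $\widetilde{\V}_{ab} = \tfrac{1}{n}\widetilde{\J}\,g_{ab}$ and $\widetilde{\J}$ is constant — the middle slot is $g_{ab} + \widetilde{\V}_{ab}(-\tfrac{1}{n}\widetilde{\J})$ only after one rescales so that the natural relation holds; I would carefully track the $\bg_{ab}$ versus $g_{ab}$ distinction (the former carries weight $2$) to see the slots match. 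The bottom slot is $\nabla_a(-\tfrac1n\widetilde{\J}) - 0 = -\tfrac1n \nabla_a\widetilde{\J}$, which vanishes because $\widetilde{\J} = g^{ab}\widetilde{\V}_{ab}$ is constant for an Einstein metric (the Einstein condition makes $\widetilde{\V}_{ab}$ parallel).

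The main obstacle is purely bookkeeping: getting the middle slot to vanish requires using the Einstein condition in the form $\widetilde{\V}_{ab} = \tfrac{1}{n}\widetilde{\J}\,g_{ab}$ together with the correct weight conventions so that $\bg_{ab}$ (the conformal metric as a section of $\ce_{(ab)}[2]$) pairs against the weight-$(-1)$ bottom slot to produce exactly $-\widetilde{\V}_{ab}\si$. So the key steps are: (i) plug $I^\beta$ into \nn{cstdtracconn}; (ii) use constancy of $\widetilde{\J}$ for the top and bottom slots; (iii) use $\widetilde{\V}_{ab}$ being a multiple of $g_{ab}$, with the multiple $\tfrac1n\widetilde{\J}$, for the middle slot, checking weights so that $\bg_{ab} + \widetilde{\V}_{ab}\cdot(-\tfrac1n\widetilde{\J}) = 0$. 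This is the content of the cited references \cite{BEG,GN}, and I would simply present the three-line slot computation.
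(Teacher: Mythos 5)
The paper itself offers no proof of this lemma---it simply cites \cite{BEG} and \cite{GN}---so your instinct to supply the three-slot computation is a reasonable one. But the computation as you have set it up does not close, and the reason is not the $\bg_{ab}$-versus-$g_{ab}$ weight bookkeeping you suspect at the end. The formula \nn{cstdtracconn} is the connection on the \emph{cotractor} bundle $\ce_\alpha=\ce[-1]\rpl\ce_a[1]\rpl\ce[1]$, whose slots are $(\tau\mid\mu_a\mid\sigma)$ with $\tau\in\ce[-1]$ in the top slot and $\sigma\in\ce[1]$ in the bottom slot. The object in the lemma carries an \emph{upper} index: it is a section of $\ce^\alpha$, and (as the pairing computation in the Proposition that follows the lemma shows, where the top entry $1$ of $I^\alpha$ multiplies the top entry $\tau=\tfrac1n\widetilde{\J}\sigma$ of a cotractor) its top slot lives in $\ce[1]$ and its bottom slot in $\ce[-1]$. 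You fed the raw column into \nn{cstdtracconn} with $\tau=1$ and $\sigma=-\tfrac1n\widetilde{\J}$; that is why your middle slot came out as $g_{ab}\bigl(1-\tfrac{1}{n^2}\widetilde{\J}^2\bigr)$, which genuinely does not vanish, and none of the subsequent attempted patches (imposing $\widetilde{\J}=n$, ``rescaling so the natural relation holds'', reassigning weights) is correct or even self-consistent.

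The fix is one line. Since the tractor metric \nn{cstdtracmetr} is parallel, $I^\alpha$ is parallel if and only if $I_\alpha:=\widetilde{h}_{\alpha\beta}I^\beta$ is, and lowering the index with \nn{cstdtracmetr} swaps the two outer slots, giving the cotractor $I_\alpha\stackrel{g}{=}(-\tfrac1n\widetilde{\J}~\mid~0~\mid~1)$, i.e.\ $\tau=-\tfrac1n\widetilde{\J}$, $\mu_a=0$, $\sigma=1$. Now \nn{cstdtracconn} gives: top slot $-\tfrac1n\nabla_a\widetilde{\J}=0$ by constancy of $\widetilde{\J}$ for an Einstein metric; middle slot $\bg_{ab}\cdot(-\tfrac1n\widetilde{\J})+\widetilde{\V}_{ab}\cdot 1=0$ because the Einstein condition makes $\widetilde{\V}_{ab}=\tfrac1n\widetilde{\J}g_{ab}$ pure trace (Lemma \ref{proj-conf-Schouten}); bottom slot $\nabla_a 1-0=0$. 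With that correction your proposal becomes the standard verification, and the steps you list---constancy of $\widetilde{\J}$ and pure-trace $\widetilde{\V}$---are indeed the only inputs needed.
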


Thus the kernel of $I^\alpha$ (a rank-$(n+1)$ subbundle of the
co-tractor bundle $\ce_\alpha$) is preserved by the tractor
connection.

\begin{proposition}
Define a bundle inclusion $\iota:\ct^*\to \widetilde{\ct}^*$ of the
projective into the conformal co-tractor bundle by, for a projective
co-tractor $U_A=(\mu_a ~ \mid ~ \si)$,
\[
\iota_\alpha{}^A U_A = \left( \tfrac{1}{n}\widetilde{\J} \sigma ~\mid~
\mu_a ~\mid~ \sigma \right).
\]
\begin{enumerate}
\item The image of $\iota$ is the kernel of the parallel tractor
  $I^\alpha$; thus, $\iota_\alpha{}^AI^\alpha=0$.
\item The bundle inclusion $\iota$ is connection-preserving.
\end{enumerate}
\end{proposition}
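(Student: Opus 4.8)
The plan is to verify both parts by direct computation in the splittings of $\ce_A$ and $\ce_\alpha$ determined by the fixed Einstein metric $g$ and its Levi-Civita connection $\nabla\in[\nabla]$. Beyond the defining formulas \nn{pconn} for the projective cotractor connection $\nabla^{\mathcal{T}^*}$ and \nn{cstdtracconn} for the conformal cotractor connection $\widetilde\nabla$, the only facts used are that $\nabla_a\widetilde\J=0$ (since $g$ is Einstein), together with the identities $\widetilde\V_{ab}=\tfrac1n\widetilde\J\,\bg_{ab}$ and $\V_{ab}=\tfrac2n\widetilde\J\,\bg_{ab}$, which are immediate from Lemma \ref{proj-conf-Schouten}.

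For part (1) I would take an arbitrary projective cotractor $U_A=(\mu_b ~\mid~ \sigma)$, so that $\iota_\alpha{}^AU_A=(\tfrac1n\widetilde\J\sigma ~\mid~ \mu_b ~\mid~ \sigma)$, and contract slotwise with $I^\alpha=(1 ~\mid~ 0 ~\mid~ -\tfrac1n\widetilde\J)$: the middle slot of $I^\alpha$ vanishes, while the first and last slots contribute $\tfrac1n\widetilde\J\sigma$ and $-\tfrac1n\widetilde\J\sigma$, so the contraction is $0$. Since $U_A$ was arbitrary this is the identity $\iota_\alpha{}^AI^\alpha=0$, i.e.\ the image of $\iota$ is contained in $\ker I^\alpha$. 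As $\iota$ is manifestly injective (its image recovers $\mu_b$ and $\sigma$), that image is a rank-$(n+1)$ subbundle; and $I^\alpha$ is nowhere zero (its leading component is $1$), so $\ker I^\alpha$ is also a rank-$(n+1)$ subbundle. Equality of ranks forces the image of $\iota$ to be exactly $\ker I^\alpha$, which is part (1).

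For part (2) I would apply $\widetilde\nabla_a$, in the form \nn{cstdtracconn}, to $\iota_\alpha{}^AU_A=(\tfrac1n\widetilde\J\sigma ~\mid~ \mu_b ~\mid~ \sigma)$ and compare the three resulting slots with those of $\iota_\alpha{}^A(\nabla^{\mathcal{T}^*}_aU_A)$, where \nn{pconn} gives $\nabla^{\mathcal{T}^*}_aU_A=(\nabla_a\mu_b+\V_{ab}\sigma ~\mid~ \nabla_a\sigma-\mu_a)$. Using $\nabla_a\widetilde\J=0$ and $\widetilde\V_{ab}\bg^{bc}=\tfrac1n\widetilde\J\,\delta_a{}^c$, the leading slot of $\widetilde\nabla_a\iota_\alpha{}^AU_A$ collapses to $\tfrac1n\widetilde\J(\nabla_a\sigma-\mu_a)$; using $\tfrac1n\widetilde\J\,\bg_{ab}+\widetilde\V_{ab}=\tfrac2n\widetilde\J\,\bg_{ab}=\V_{ab}$, the middle slot becomes $\nabla_a\mu_b+\V_{ab}\sigma$; and the bottom slot is $\nabla_a\sigma-\mu_a$. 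These are exactly the slots of $\iota_\alpha{}^A(\nabla^{\mathcal{T}^*}_aU_A)=(\tfrac1n\widetilde\J(\nabla_a\sigma-\mu_a) ~\mid~ \nabla_a\mu_b+\V_{ab}\sigma ~\mid~ \nabla_a\sigma-\mu_a)$, so $\widetilde\nabla_a\iota_\alpha{}^AU_A=\iota_\alpha{}^A\nabla^{\mathcal{T}^*}_aU_A$. Being a tensorial identity this holds in every splitting, which is precisely the statement that $\iota$ intertwines the projective and conformal cotractor connections.

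I do not expect a genuine obstacle: the argument is essentially mechanical. The two points requiring care are, for part (1), keeping track of which slot of $I^\alpha$ is contracted against which slot of $\iota_\alpha{}^AU_A$ under the natural pairing of $\ce^\alpha$ with $\ce_\alpha$ (equivalently, recalling that raising a tractor index via $\widetilde h$ as in \nn{cstdtracmetr} interchanges the top and bottom slots); and, for part (2), ensuring that the projective splitting of $\ce_A$ and the conformal splitting of $\ce_\alpha$ in use really are the mutually compatible ones cut out by the single Einstein metric $g$, so that the $\V$ and $\widetilde\V$ appearing in \nn{pconn} and \nn{cstdtracconn} are genuinely linked by Lemma \ref{proj-conf-Schouten}. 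Once those conventions are fixed, the cancellation in (1) and the slot-by-slot agreement in (2) are forced.
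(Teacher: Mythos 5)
Your proposal is correct and follows essentially the same route as the paper: slotwise contraction with $I^\alpha$ for part (1), and a direct slot-by-slot comparison of \nn{cstdtracconn} applied to $\iota_\alpha{}^AU_A$ against $\iota_\alpha{}^A$ of \nn{pconn}, using $\nabla\widetilde\J=0$, $\widetilde\V_{ab}=\tfrac1n\widetilde\J\bg_{ab}$ and $\V_{ab}=2\widetilde\V_{ab}$ from Lemma \ref{proj-conf-Schouten}, for part (2). Your explicit rank-counting argument upgrading ``image contained in kernel'' to ``image equals kernel'' is a small point the paper leaves implicit, and is a welcome addition.
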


\begin{proof}
\begin{enumerate}
\item Calculate the pairing of $I^\alpha$ with a tractor in the image
  of this inclusion: for sections $\sigma$ of $\ce$ and $\mu_a$ of
  $\ce_a$, the co-tractor $U_A=(\mu_a ~ \mid ~ \si)$ indeed satisfies
\[
\iota_\alpha{}^A U_AI^\alpha =\tfrac{1}{n}\widetilde{\J} \sigma \cdot
1 -\tfrac{1}{n}\widetilde{\J}\cdot \sigma = 0.
\]

\item Compare the restriction to this sub-bundle of the conformal
  tractor connection (\ref{cstdtracconn}) with the projective tractor
  connection (\ref{tconn}) on its preimage: for sections $\sigma$ of
  $\ce$ and $\mu_a$ of $\ce_a$,
\begin{eqnarray*}
\widetilde{\nabla}_b \left( \tfrac{1}{n}\widetilde{\J} \sigma ~\mid~
\mu_a ~\mid~ \sigma \right) & =&
 \begin{pmatrix}
 \na_b(\tfrac{1}{n}\widetilde{\J} \sigma)
 -\widetilde{\V}_{bc}g^{ca}\cdot \mu_a\\ \na_b\mu_a +g_{ba}\cdot
 \tfrac{1}{n}\widetilde{\J} \sigma +\widetilde{\V}_{ba} \cdot\sigma
 \\ \na_b\sigma -\mu_b
 \end{pmatrix}^T\\
&=&
 \begin{pmatrix}
 \tfrac{1}{n}\widetilde{\J} (\na_b\sigma -\mu_b)\\ \na_b\mu_a
 +2\widetilde{\V}_{ab} \sigma \\ \na_b\sigma -\mu_b
 \end{pmatrix}^T;\\
\na_b (\mu_a ~ \mid ~ \si) &=&(\na_b \mu_a + \V_{ab}\sigma ~ \mid ~
\na_b\sigma - \mu_b);
\end{eqnarray*}
for the former calculation using the Einstein properties that
$\widetilde{\V}_{ab}=\tfrac{1}{n}\widetilde{\J} g_{ab}$ and that
$\widetilde{\J}$ is constant.

Using that $\widetilde{\V}_{ab}=\tfrac{1}{2}\V_{ab}$ (Lemma
\ref{proj-conf-Schouten}) to compare the two right-hand sides, we
obtain, as required, that for $U_A= ( \mu_a ~ \mid ~ \sigma)$,
\[
\widetilde{\na}_b \left(\iota_\alpha{}^A U_A\right)
=\iota_\alpha{}^A\left(\na_b U_A\right).
\]
\end{enumerate}
\end{proof}

\begin{proposition}
The projective tractor sub-metric $h^{AB}$ (Theorem \ref{charth}) is
the pullback under the inclusion $\iota$ of the conformal tractor
metric $\widetilde{h}^{\alpha\beta}$.  That is,
$h^{AB}=\iota_\alpha{}^A\iota_\beta{}^B\widetilde{h}^{\alpha\beta}$.
\end{proposition}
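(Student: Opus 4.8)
The plan is to verify $h^{AB}=\iota_\alpha{}^A\iota_\beta{}^B\widetilde h^{\alpha\beta}$ by a direct computation in the splittings of the projective and conformal co-tractor bundles determined respectively by $\nabla$ and $g$, working throughout in the metric-induced trivialisation of the density bundles $\ce(w)$, $\ce[w]$ (so that weights are suppressed and $\bg^{ab}$, $g^{ab}$ denote the same inverse metric). This is the curved analogue of the model fact from Subsection \ref{model} that the projective tractor metric restricts from the conformal one on the hyperplane $I^\perp$; since the preceding proposition defining $\iota$ has already identified its image with $\ker I^\alpha$ and shown it connection-preserving, the only thing left is to pin down that the induced fibre metric is exactly $h^{AB}$ rather than some rescaling of it.

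Concretely, I would take arbitrary projective co-tractors $U_A\stackrel{\nabla}{=}(\mu_a ~\mid~ \sigma)$ and $\underline U_B\stackrel{\nabla}{=}(\underline\mu_b ~\mid~ \underline\sigma)$, feed them through the component formula for $\iota$ to obtain $\iota_\alpha{}^A U_A\stackrel{g}{=}(\tfrac1n\widetilde\J\sigma ~\mid~ \mu_a ~\mid~ \sigma)$ and likewise for $\underline U$, and substitute into the conformal tractor metric formula \nn{cstdtracmetr}. Reading off the three pairings immediately gives
\[
\iota_\alpha{}^A\iota_\beta{}^B\widetilde h^{\alpha\beta}\,U_A\underline U_B \;=\; g^{ab}\mu_a\underline\mu_b \;+\; \tfrac2n\widetilde\J\,\sigma\underline\sigma .
\]
On the other side, since $g$ is Einstein we may write $\V_{ab}=\tau g_{ab}$ for a constant $\tau$, and then \nn{gscale} (equivalently Lemma \ref{lemon}) says the projective sub-metric in the $\nabla$-splitting is $h^{AB}U_A\underline U_B = g^{ab}\mu_a\underline\mu_b + \tau\,\sigma\underline\sigma$.

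So the single substantive point is to match the coefficients of $\sigma\underline\sigma$, i.e.\ that $\tfrac2n\widetilde\J=\tau$; here I would invoke Lemma \ref{proj-conf-Schouten}, which gives $\widetilde\V_{ab}=\tfrac12\V_{ab}=\tfrac\tau2 g_{ab}$, so tracing with $g^{ab}$ yields $\widetilde\J=\tfrac{n}{2}\tau$ and hence $\tfrac2n\widetilde\J=\tau$ as wanted. Since $U_A,\underline U_B$ were arbitrary this establishes the tensor identity. I do not anticipate any genuine obstacle: the entire content is bookkeeping with the density trivialisations and the factor-$2$ relation between the conformal and projective Schouten tensors. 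The only pitfall worth flagging is notational: one must not conflate the various ``Einstein constants'' in play (the Ricci proportionality $\lambda$ of Lemma \ref{proj-conf-Schouten}, the projective-Schouten proportionality $\tau$, and $\widetilde\J$), which differ by explicit powers of $n$ and $n-1$.
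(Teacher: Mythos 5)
Your proposal is correct and follows essentially the same route as the paper: apply $\iota$ to two co-tractors in the $\nabla$-splitting, substitute into \nn{cstdtracmetr} to get $g^{ab}\mu_a\underline{\mu}_b+\tfrac{2}{n}\widetilde{\J}\sigma\underline{\sigma}$, and identify this with \nn{gscale} via Lemma \ref{proj-conf-Schouten}. You merely spell out the coefficient check $\tfrac{2}{n}\widetilde{\J}=\tau$ that the paper compresses into ``by Lemma \ref{proj-conf-Schouten}'', and your caution about the distinct proportionality constants is well placed.
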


\begin{proof}
Define two projective co-tractors $U_A$, $\underline{U}_B$, by, in the
splitting $\nabla$,
\[
U_A=(\mu_a ~\mid ~ \sigma), \quad \underline{U}_A=( \underline{\mu}_a
~\mid ~ \underline{\sigma}).
\]
The conformal inner product $\widetilde{h}^{\alpha\beta}$
(\ref{cstdtracmetr}) of their images under $\iota$ is then
\[
\widetilde{h}^{\alpha\beta}(\iota_\alpha{}^AU_A)(\iota_\beta{}^B\underline
U_B) = g^{ab}(\mu_a\underline\mu_b) +
\tfrac{2}{n}\widetilde{\J}\sigma\underline\sigma.
\]
By Lemma \ref{proj-conf-Schouten}, this is exactly (\ref{gscale}).
\end{proof}

\end{document}